\providecommand{\U}[1]{\protect\rule{.1in}{.1in}}
\newtheorem{theorem}{Theorem}
\newtheorem{corollary}[theorem]{Corollary}
\newtheorem{lemma}[theorem]{Lemma}
\newtheorem{proposition}[theorem]{Proposition}
\newtheorem{remark}[theorem]{Remark}
\begin{document}

\title[A decay estimate for the Fourier transform of certain singular measures]{A decay estimate for the Fourier transform of certain singular measures in $\mathbb{R}^{4}$ and applications}
\author{Tom\'as Godoy and Pablo Rocha}
\address{FaMAF, Universidad Nacional de C\'ordoba, Ciudad Universitaria, 5000 C\'ordoba, Argentina.}
\email{godoy@famaf.unc.edu.ar, rp@famaf.unc.edu.ar}
\thanks{\textbf{Key words and phrases}: Singular measures, Fourier transform, restriction theorems, convolution operators.}
\thanks{\textbf{2.020 Math. Subject Classification}: 42B20, 42B10.}

\begin{abstract}
We consider, for a class of functions $\varphi : \mathbb{R}^{2} \setminus \{ {\bf 0} \} \to \mathbb{R}^{2}$ satisfying a nonisotropic homogeneity condition, the Fourier transform $\widehat{\mu}$ of the Borel measure on $\mathbb{R}^{4}$ defined by 
\[
\mu(E) = \int_{U} \chi_{E}(x, \varphi(x)) \, dx
\]
where $E$ is a Borel set of $\mathbb{R}^{4}$ and $U = \{ (t^{\alpha_1}, t^{\alpha_2}s) : c < s < d, \, 0 < t < 1 \}$. The aim of this article is to give a decay estimate for $\widehat{\mu}$, for the case where the set of nonelliptic points of $\varphi$ is a curve in 
$\overline{U} \setminus \{ {\bf 0} \}$. From this estimate we obtain a restriction theorem for the usual Fourier transform to the graph of 
$\left. \varphi \right|_{U} : U \to \mathbb{R}^{2}$. We also give $L^{p}$-improving properties for the convolution operator 
$T_{\mu} f = \mu \ast f$.
\end{abstract}

\maketitle

\section{Introduction}

Let $U$ be an open bounded set in $\mathbb{R}^{n}$ and $\varphi : U \to \mathbb{R}^{m}$ be a continuous function. Let $\mu_U$ be the Borel measure on $\mathbb{R}^{n+m}$ supported on the graph of $\varphi$, defined by

\begin{equation} \label{mu}
\mu_U (E) = \int_{U} \chi_E (x, \varphi(x)) \, dx,
\end{equation}
\\
where $dx$ denotes the Lebesgue measure on $\mathbb{R}^{n}$. Let $\Sigma$ be the graph of $\varphi$, i.e.

\begin{equation} \label{variedad sigma}
\Sigma = \left\{ (x, \varphi(x)) : x \in U \right\}.
\end{equation}
\\
For $f \in \mathcal{S}(\mathbb{R}^{n+m})$, let $T_{\mu_U} f = \mu_U \ast f$ and let $\mathcal{R}f = \widehat{f} \, |_{\Sigma}$, 
where $\widehat{f}$ denotes the usual Fourier transform of $f$. The $L^{p}(\mathbb{R}^{n+m}) - L^{q}(\Sigma)$ boundedness of the restriction 
operator $\mathcal{R}$ to the sub-manifold $\Sigma \subset \mathbb{R}^{n+m}$ has been widely studied, in different cases, by many authors.
For $S^{1} \subset \mathbb{R}^{2}$ by C. Fefferman in \cite{Feff}, for $S^{n-1} \subset \mathbb{R}^{n}$, $n \geq 2$, by P. Tomas and 
E. Stein in \cite{Tomas} and \cite{Stein2} respectively, for quadratic surfaces with non vanishing gaussian curvature by R. Strichartz in \cite{Str}. The case of compact $n-$dimensional manifolds $\Sigma \subset \mathbb{R}^{2n}$ was studied by E. Prestini in \cite{Pres} under the assumption that: for each $x_0 \in \Sigma$ there exists a local chart $X(x_1, ..., x_n)$ around $x_0$ satisfying that the vectors 
$\left\{ \displaystyle{\frac{\partial X}{\partial x_i}} \right\}_{i=1}^{n}$ and 
$\left\{ \displaystyle{\frac{\partial^{2} X}{\partial x_i \partial x_j}} \right\}_{i, j=1}^{n}$ span $\mathbb{R}^{2n}$.
The case of two dimensional manifolds $\Sigma = \left\{ (x, \varphi(x)) : x \in U \subset \mathbb{R}^{2}\right\} \subset \mathbb{R}^{4}$
was studied by M. Christ in \cite{Christ}, where a restriction theorem is given under the assumption that: for each $x \in U$ and 
$\theta \in \mathbb{R}$, if $\det {\bf H}_{x} \left( \left\langle \varphi(x), (\cos(\theta), \sin(\theta)) \right\rangle \right) = 0$
(where $\langle \, , \,  \rangle$ denotes the inner product in $\mathbb{R}^{2}$ and ${\bf H}_{x}$ denotes the Hessian matrix of second partial derivatives with respect to $x$) then
$\frac{d}{d\theta}\det {\bf H}_{x} \left(\left\langle \varphi(x), (\cos(\theta), \sin(\theta)) \right\rangle \right) \neq 0$. More restriction theorems for homogeneous sub-manifolds of $\mathbb{R}^{n}$ can be found in \cite{Carli2} and \cite{Della}. A very interesting book about the Fourier restriction problem can be found in \cite{Demeter}.

On the other hand, the $L^{p}(\mathbb{R}^{n+m}) - L^{q}(\mathbb{R}^{n+m})$ boundedness of the convolution operator $T_{\mu_U}$ has
received also considerable attention in the literature. Define the type set $E_{\mu_U}$ as the set of the pairs
$\left( \frac{1}{p}, \frac{1}{q} \right) \in [0,1] \times [0,1]$ such that $\|T_{\mu_U} f\|_{q} \leq C_{p,q} \| f \|_{p}$ for all
$f \in \mathcal{S}(\mathbb{R}^{n+m})$, where $C_{p,q}$ is a positive constant depending only on $p$ and $q$. Explicit descriptions of the set
$E_{\mu_U}$ are known for many cases (see e.g. \cite{Fulvio} and the references therein).

In particular, S. W. Drury and K. Guo in \cite{Drury} studied the case $n=m$, for $\varphi = (\varphi_1, ..., \varphi_n)$ regular enough in an open $V \subset \mathbb{R}^{n}$, and they proved that if each point in $V$ is elliptic for $\varphi$ and $U$ is a bounded open set such that $\overline{U} \subset V$, then the set $E_{\mu_U}$ is the closed triangle with vertices $(0,0)$, $(1,1)$ and $\left(\frac{2}{3}, \frac{1}{3} \right)$. Elliptic points can be defined as follows: for $x \in V$, consider the function $Q_x$ on $\mathbb{R}^{n}$ defined by
\begin{equation} \label{Qx}
Q_{x}(\zeta) = \det\left( \sum_{j=1}^{n} \zeta_j \varphi''_j (x)  \right), \,\,\,\,\,\,\,\, \zeta = (\zeta_1, ..., \zeta_n) \in 
\mathbb{R}^{n},
\end{equation}
where $\varphi''_j (x) := ({\bf H} \varphi_j)(x)$ is the Hessian matrix of the function $\varphi_j$ at $x$. We point out that $Q_x$ is only a quadratic form for $n=2$. Then, we say that $x \in V$ is elliptic if $\displaystyle{\min_{\zeta \in S^{n-1}}} |Q_x(\zeta)| > 0$, i.e.: if for all $\zeta \in S^{n-1}$ the surface 
$\Sigma_{\zeta} = \{ (y, \langle \varphi(y), \zeta \rangle) : y \in V\}$ has nonzero curvature at $x$ (although the definition of elliptic point used in \cite{Drury} is different from the given above, at least for $n=2$, they are equivalent; see the comments after Lemma \ref{elliptic point} below). Similar results about the type set $E_{\mu_U}$ are given in \cite{Godoy} for the case when $\varphi$ is a non isotropic homogeneous function such that the origin is the unique nonelliptic point.

Our aim in this paper is to study the case where $n=m=2$ and $\varphi$ is a function with non isotropic homogeneity, whose set of nonelliptic points is a curve in $\mathbb{R}^{2}$.

Now, we state our precise assumptions and results. Let $\alpha_1, \, \alpha_2 >0$ with $\alpha_1 \neq \alpha_2$, and for $t>0$, 
$x=(x_1, x_2) \in \mathbb{R}^{2}$, let
\begin{equation} \label{dilation}
t \bullet x = \left( t^{\alpha_1} x_1, t^{\alpha_2} x_2 \right).
\end{equation} 
For $a < b$, we set

\[
V^{a,b} = \left\{ t \bullet (1,s) : s \in (a,b) \,\, \text{and} \,\,  t > 0 \right\},
\]
\\
and for $a < c < d < b$, we put

\begin{equation} \label{set Vcd1}
V^{c,d}_{1} = \left\{ t \bullet (1,s) : s \in (c,d) \,\, \text{and} \,\,  0 < t < 1 \right\},
\end{equation} \\
and let $\varphi =(\varphi_1, \varphi_2) : V^{a,b} \to \mathbb{R}^{2}$ be a function. We assume that the following hypotheses hold:

\

H1) $\varphi$ is a real analytic function on $V^{a,b}$.

H2) For some $m \geq 3 (\alpha_1 + \alpha_2)$, $\varphi(t \bullet x)= t^{m} \varphi(x)$ for all $x \in V^{a,b}$, and all $t > 0$.

H3) For $a < c < d < b$ and some $\sigma \in [c, d]$, the set of nonelliptic  points for $\varphi$ in 
$\overline{V^{c,d}} \setminus \{ {\bf 0}\}$ is the curve  $\{ t \bullet (1, \sigma) : t>0 \}$.

\

Under these assumptions, from Lemma \ref{elliptic point} and Lemma \ref{eigen functions} (see Preliminaries below), there exist two positive integers $n_1$ and $n_2$, and a positive constant $D$ such that for $\delta$ positive and small enough

\begin{equation} \label{estim 1}
\displaystyle{\min_{\zeta \in S^{1}}} |Q_{t \bullet (1,s)}(\zeta)| \geq D \, t^{\beta} |s - \sigma|^{n_1} \,\, \text{for all} \,\, s \in 
(\sigma - \delta, \sigma), \,\, \text{and all} \,\, t > 0, \,\,\,\, \text{if} \,\, \sigma \in (c,d],
\end{equation}

and
\begin{equation} \label{estim 2}
\displaystyle{\min_{\zeta \in S^{1}}} |Q_{t \bullet (1,s)}(\zeta)| \geq D \, t^{\beta} |s - \sigma|^{n_2} \,\, \text{for all} \,\, s \in 
(\sigma, \sigma + \delta), \,\, \text{and all} \,\, t > 0, \,\,\,\, \text{if} \,\, \sigma \in [c,d).
\end{equation} \\
where $\beta = 2(m - \alpha_1 - \alpha_2)$. Taking into account the definition of elliptic points, (\ref{estim 1}) and (\ref{estim 2}) suggest
to use the number $\max \{ n_1, n_2 \}$ as a measure of the degeneracy of ellipticity along the curve $\{ t \bullet (1, \sigma) : t>0 \}$.

\

Let $a < c < d < b$ such that $\sigma \in [c,d]$ ($\sigma$ satisfies H3), and let $\mu$ be the measure, fixed from now on, defined by (\ref{mu}) taking there 
$U = V^{c,d}_{1}$, where $V^{c,d}_{1}$ is given by (\ref{set Vcd1}). Let $\widehat{\mu}$ be its Fourier transform given, for 
$\xi', \xi'' \in \mathbb{R}^{2}$, by
\[
\widehat{\mu}(\xi', \xi'') = \int_{V^{c,d}_{1}} e^{-i\left( \langle x, \xi' \rangle + \langle \varphi(x), \xi'' \rangle  \right)} \, dx.
\]

In Section 3, Theorem \ref{muhat estimate}, we prove that $\widehat{\mu}$ satisfies, for some constant $C>0$, the following estimate

\begin{equation} \label{estim 3}
\left|  \widehat{\mu}(\xi', \xi'') \right| \leq C \, |\xi''|^{-\frac{\alpha_1+\alpha_2}{m}}, \,\,\,\,\,\, \text{for all} \,\,
\xi' \in \mathbb{R}^{2} \,\, \text{and all} \,\, \xi'' \in \mathbb{R}^{2} \setminus \{ {\bf 0} \}.
\end{equation}  

\

In Section 4, we consider the restriction operator $\mathcal{R}f = \widehat{f} \, |_{\Sigma}$, with $\Sigma$ given by 
(\ref{variedad sigma}) (with $U = V^{c,d}_{1}$). Following ideas in \cite{Christ}, using (\ref{estim 3}) and complex interpolation for a suitable analytic family of operators, in Theorem \ref{Restrict thm}, we prove that there exists a constant $C >0$ such that 
$\| \mathcal{R}f \|_{L^{2}(\Sigma)} \leq C \, \| f \|_{L^{p}(\mathbb{R}^{4})}$ for all $f \in \mathcal{S}(\mathbb{R}^{4})$ if and only if
$\displaystyle{\frac{\alpha_1 + \alpha_2 + 4m}{2(\alpha_1 + \alpha_2 + 2m)}} \leq \frac{1}{p} \leq 1$. Finally, using again (\ref{estim 3}),
we study the type set $E_{\mu}$ for the convolution operator $T_{\mu}f = \mu \ast f$, for $f \in \mathcal{S}(\mathbb{R}^{4})$. Theorem 
\ref{Type set thm} states that, for $p = \displaystyle{\frac{\alpha_1 + \alpha_2 + 2m}{\alpha_1 + \alpha_2 + m}}$, the closed triangle with vertices $(0,0)$, $(1,1)$ and $\left(\frac{1}{p}, \frac{1}{p'} \right)$ is contained in $E_{\mu}$; moreover 
$\left(\frac{1}{p}, \frac{1}{p'} \right) \in \partial E_{\mu}$. Finally, in Section 5, we give an example of a function 
$\varphi =(\varphi_1, \varphi_2) : V^{a,b} \to \mathbb{R}^{2}$ satisfying the hypotheses required.

\section{Preliminaries}

For the sequel, we consider a function $\varphi = (\varphi_1, \varphi_2) : V^{a,b} \subset \mathbb{R}^{2} \setminus \{ {\bf 0} \} \to 
\mathbb{R}^{2}$ satisfying H1-H3. Given $x \in V^{a,b}$, let $Q_x$ be the quadratic form on $\mathbb{R}^{2}$ defined by

\begin{equation} \label{Quad}
Q_x(\zeta) = det \left( \zeta_1 \varphi_{1}''(x) + \zeta_2 \varphi_{2}''(x) \right), \,\,\,\,\, \zeta = (\zeta_1, \zeta_2) \in \mathbb{R}^{2},
\end{equation} \\
where, for each $j=1,2$, $\varphi''_j (x) := ({\bf H} \varphi_j)(x)$ is the Hessian matrix of the function 
$\varphi_j : V^{a,b} \to \mathbb{R}$ at $x$; and let $K(x) = \left( k_{ij}(x) \right)$ be its associated symmetric matrix, i.e.: satisfying

\begin{equation} \label{matrix K}
Q_x(\zeta) = \left\langle K(x) \zeta, \zeta \right\rangle, \,\,\,\,\, \zeta = (\zeta_1, \zeta_2) \in \mathbb{R}^{2}.
\end{equation}
\\
We say that a point $x \in V^{a,b}$ is elliptic for $\varphi$, if $\displaystyle{\min_{\zeta \in S^{1}}} |Q_x(\zeta)| > 0$.

\begin{lemma} \label{elliptic point}
If $x \in V^{a,b}$ is a elliptic point for $\varphi$, then the eigenvalues of $K(x)$ are either both positive or both negative and
\[
\displaystyle{\min_{\zeta \in S^{1}}} |Q_x(\zeta)|= \min \{ |\Lambda| : \Lambda \, \text{is an eigenvalue of} \,\, K(x) \}.
\]
\end{lemma}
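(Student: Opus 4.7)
The statement is essentially a consequence of the spectral theorem applied to the $2\times 2$ real symmetric matrix $K(x)$, so my plan is to reduce everything to a diagonalization argument.

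First I would invoke the spectral theorem to write $K(x) = P^{T} D P$, where $P$ is orthogonal and $D = \mathrm{diag}(\lambda_1, \lambda_2)$ with $\lambda_1, \lambda_2 \in \mathbb{R}$ the eigenvalues of $K(x)$. Since $P$ is orthogonal, the map $\zeta \mapsto P\zeta$ is a bijection of $S^1$ onto itself, so
\[
\min_{\zeta \in S^{1}} |Q_x(\zeta)| \;=\; \min_{\zeta \in S^{1}} \bigl| \lambda_1 \zeta_1^{2} + \lambda_2 \zeta_2^{2} \bigr|.
\]
Thus it suffices to analyze the quadratic form $\lambda_1 \zeta_1^2 + \lambda_2 \zeta_2^2$ on the unit circle.

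Next I would rule out the mixed-sign and degenerate cases. Parametrizing $\zeta_1^{2} = t$, $\zeta_2^{2} = 1-t$ with $t \in [0,1]$, the quantity $\lambda_1 \zeta_1^{2} + \lambda_2 \zeta_2^{2}$ sweeps out continuously the entire closed interval with endpoints $\lambda_1$ and $\lambda_2$. Consequently, if $\lambda_1$ and $\lambda_2$ have opposite signs, or if one of them vanishes, then $0$ lies in this interval, so there exists $\zeta \in S^{1}$ with $Q_x(\zeta) = 0$, contradicting the hypothesis that $x$ is elliptic. Hence both eigenvalues must have the same (strict) sign, which is the first claim.

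Finally, assuming without loss of generality that $\lambda_1, \lambda_2 > 0$ (the negative case is symmetric, obtained by replacing $\varphi$ with $-\varphi$, or equivalently $K(x)$ with $-K(x)$), the function $t \mapsto \lambda_1 t + \lambda_2 (1-t)$ is affine and positive on $[0,1]$, so its absolute value attains its minimum at an endpoint, yielding $\min_{\zeta \in S^1}|Q_x(\zeta)| = \min(\lambda_1, \lambda_2) = \min\{|\lambda_1|, |\lambda_2|\}$, as required. The argument is essentially a one-paragraph linear-algebra exercise; the only place requiring a bit of care is the intermediate-value step used to exclude opposite-sign eigenvalues, but this is immediate from continuity of $Q_x$ on the connected set $S^{1}$.
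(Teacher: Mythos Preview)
Your proof is correct and follows essentially the same approach as the paper, which simply states that the lemma ``follows immediately from the definition of elliptic point and (\ref{matrix K})''; you have just written out the standard diagonalization argument that the paper leaves implicit.
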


\begin{proof} Follows immediately from the definition of elliptic point and (\ref{matrix K}).
\end{proof}

Let us quote that a definition of elliptic point, different from the stated at the introduction, is given in \cite{Drury}. Let us recall it:
For $x \in V^{a,b}$, $h \in \mathbb{R}^{2}$, let $\varphi''(x) h$ be the $2 \times 2$ matrix whose $j-$th column is $\varphi_{j}''(x) h$, where $\varphi''_j (x)$ is the Hessian matrix of the function $\varphi_j : V^{a,b} \to \mathbb{R}$ at $x$. A point $x \in V^{a,b}$ is called elliptic in \cite{Drury}, if $\det \left( \varphi''(x) h \right) \neq 0$ for all $h \in \mathbb{R}^{2} \setminus \{ {\bf 0} \}$. This definition is equivalent to ours. Indeed, consider the symmetric matrix $B(x)$ defined by 
$\langle B(x) \zeta, \zeta \rangle = \det \left( \varphi''(x) \zeta \right)$, $\zeta \in \mathbb{R}^{2}$. An explicit computation shows that $\det B(x) = \det K(x)$ and so the two definitions agree.

\begin{lemma}\label{eigen functions}
$(i)$ There exist two homogeneous functions $\Lambda_1$ and $\Lambda_2$ on $V^{a,b}$ of degree $\beta = 2(m - \alpha_1 - \alpha_2)$ respect to the dilations given in $(\ref{dilation})$, which give the eigenvalues of $K(x)$. Moreover, $\Lambda_1$ and $\Lambda_2$ result real analytic functions on $V^{a,b}$. \\
$(ii)$ For $\delta > 0$ and small enough, it holds that: \\
for some $j=1,2$, $\min\{|\Lambda_i(x)| : i=1,2 \} = |\Lambda_j(x)|$ for all $x \in V^{\sigma-\delta, \sigma}$, if $\sigma \in (c,d]$, and \\
for some $k=1,2$, $\min\{|\Lambda_i(x)| : i=1,2 \} = |\Lambda_k(x)|$ for all $x \in V^{\sigma, \sigma + \delta}$, if $\sigma \in [c,d)$. \\
$(iii)$ There exist two positive integers $n_1$ and $n_2$, and a positive constant $D$ such that for $\delta$ positive and small enough
\[
\min \{ |\Lambda_i(t \bullet (1,s))| : i=1,2\} \geq D \, t^{\beta} |s - \sigma|^{n_1} \,\, \text{for all} \,\, s \in 
(\sigma - \delta, \sigma), \,\, \text{and all} \,\, t > 0, \,\,\,\, \text{if} \,\, \sigma \in (c,d]
\]
and
\[
\min \{ |\Lambda_i(t \bullet (1,s))| : i=1,2\} \geq D \, t^{\beta} |s - \sigma|^{n_2} \,\, \text{for all} \,\, s \in 
(\sigma, \sigma + \delta), \,\, \text{and all} \,\, t > 0, \,\,\,\, \text{if} \,\, \sigma \in [c,d),
\]
with $\beta = 2(m - \alpha_1 - \alpha_2)$. \\
$(iv)$ If $I \subset [c,d]$ is a closed interval such that $\sigma \notin I$, then there exists a positive constant $\tilde{D}$ such that
$\min \{ |\Lambda_i(t \bullet (1,s))| : i=1,2\} \geq \tilde{D} \, t^{\beta}$ for all $s \in I$ and all $t > 0$.
\end{lemma}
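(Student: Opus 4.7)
The plan is to reduce every part of the lemma to the slice $t=1$ via the homogeneity of $K$, and then to exploit one-variable real analyticity. For (i), differentiating $\varphi_j(t \bullet x) = t^m \varphi_j(x)$ twice in $x_k$ and $x_l$ with the chain rule gives
\[
\varphi_j''(t \bullet x)_{kl} = t^{\,m - \alpha_k - \alpha_l}\,\varphi_j''(x)_{kl},
\]
and expanding the $2\times 2$ determinant in (\ref{Quad}) then produces $Q_{t\bullet x}(\zeta) = t^{2(m-\alpha_1-\alpha_2)} Q_x(\zeta)$, so $K(t\bullet x) = t^\beta K(x)$. The eigenvalues $\Lambda_1,\Lambda_2$ of $K(x)$, given by the standard formula $\tfrac12 \operatorname{tr} K(x) \pm \tfrac12\sqrt{(\operatorname{tr} K(x))^2 - 4\det K(x)}$, inherit homogeneity of degree $\beta$ directly from $K$. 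Real analyticity follows from that of $\operatorname{tr} K$ and $\det K$ together with the local structure of eigenvalues of a real-analytic symmetric matrix family; for the estimates below it will be enough to invoke Rellich's theorem along the one-parameter curve $s \mapsto (1,s)$.

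For (iv), the function $s \mapsto \min\{|\Lambda_i((1,s))| : i=1,2\}$ is continuous on the compact interval $I$ and strictly positive there, since by H3 every $(1,s)$ with $s\in I$ is elliptic and so Lemma \ref{elliptic point} forces both eigenvalues to be nonzero; compactness then yields a uniform lower bound $\tilde D>0$, and homogeneity $\Lambda_i(t\bullet(1,s)) = t^\beta \Lambda_i((1,s))$ extends it to $\tilde D\, t^\beta$ on every ray. For (ii), Rellich's theorem applied to the real-analytic one-parameter family $s \mapsto K((1,s))$ gives analytic choices of eigenvalue functions $\Lambda_i((1,s))$ on a neighborhood of $\sigma$, so the difference $\Lambda_1((1,s))^2 - \Lambda_2((1,s))^2$ is real analytic. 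If it is identically zero on $(\sigma-\delta,\sigma)$ then $|\Lambda_1|=|\Lambda_2|$ and either index works; otherwise its zeros are isolated, so after shrinking $\delta$ it has constant sign on $(\sigma-\delta,\sigma)$, which fixes the index $j$ minimizing $|\Lambda_i|$ throughout that interval. The same dichotomy applied on $(\sigma,\sigma+\delta)$ produces $k$.

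Finally, for (iii), with $j$ as in (ii), the function $\Lambda_j((1,s))$ is real analytic on a neighborhood of $\sigma$, vanishes at $\sigma$ (because $(1,\sigma)$ is the only nonelliptic point on the curve, so $\min|\Lambda_i((1,\sigma))|=0$), and is not identically zero on $(\sigma-\delta,\sigma)$ since every $(1,s)$ with $s\in(\sigma-\delta,\sigma)$ is elliptic and hence has no zero eigenvalue. Therefore there exist a positive integer $n_1$ and a real-analytic $u$ with $u(\sigma)\ne 0$ such that
\[
\Lambda_j((1,s)) = (s-\sigma)^{n_1}\,u(s);
\]
after shrinking $\delta$ so that $|u(s)|\ge |u(\sigma)|/2$ on the interval one obtains $|\Lambda_j((1,s))| \ge D\,|s-\sigma|^{n_1}$ with $D = |u(\sigma)|/2$, and homogeneity of degree $\beta$ upgrades this to $D\,t^\beta |s-\sigma|^{n_1}$ for every $t>0$. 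The symmetric argument on $(\sigma,\sigma+\delta)$ produces $n_2$. The main subtlety of the whole argument is the global real-analyticity assertion of (i), where eigenvalue coincidences can in principle obstruct the square-root formula; however, all the one-sided bounds required in (ii)--(iv) rest only on analyticity along the single curve $s\mapsto(1,s)$, which Rellich's theorem supplies unambiguously.
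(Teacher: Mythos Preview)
Your argument is correct and covers all four parts, but it proceeds differently from the paper in two places worth noting.

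For (i), the paper works out the analyticity of the eigenvalue branches by hand: it writes the explicit quadratic-formula expressions for $\Lambda_1,\Lambda_2$ and, at any point $s_0$ where the discriminant $f(s)=4k_{12}^2(1,s)+(k_{11}(1,s)-k_{22}(1,s))^2$ vanishes, factors $f(s)=(s-s_0)^{2q}h(s)^2$ to extract an analytic square root (possibly after swapping the two branches). Your invocation of Rellich's theorem on the one-parameter family $s\mapsto K((1,s))$ is precisely the abstract version of that computation. Your closing caveat about the ``global real-analyticity assertion'' is overly cautious: since $x\mapsto(t,s)$ with $x=t\bullet(1,s)$ is a real-analytic diffeomorphism onto $V^{a,b}$, extending the Rellich branches by $\Lambda_i(t\bullet(1,s)):=t^\beta\Lambda_i((1,s))$ already yields real-analytic functions on all of $V^{a,b}$; the paper uses exactly the same reduction.

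For (ii), the paper's route is shorter than yours: on each one-sided interval the points are elliptic, so by Lemma~\ref{elliptic point} both eigenvalues share a sign, and since the ordered branches satisfy $\Lambda_1\le\Lambda_2$ the index realizing $\min_i|\Lambda_i|$ is dictated by that common sign. Your argument via the analytic function $\Lambda_1^2-\Lambda_2^2$ also works and has the virtue of not relying on any ordering of the Rellich branches. One small point: your claim that $\min_i|\Lambda_i((1,\sigma))|=0$ is not literally the definition of nonellipticity (opposite nonzero signs would also make $(1,\sigma)$ nonelliptic), but it follows at once by continuity from the same-sign property on the adjacent elliptic interval. For (iii) and (iv) the paper is extremely terse, simply citing (i), (ii), and Lemma~\ref{elliptic point}; your write-up, factoring the analytic function $\Lambda_j((1,\cdot))$ at its zero $\sigma$ and using compactness on $I$, is exactly what those citations unpack.
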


\begin{proof}
A computation shows that the entries $k_{ij}(x)$ of $K(x)$ are given by

\[
k_{11}(x) = \det \varphi_{1}''(x), \,\,\,\,\,\,\,  k_{22}(x) = \det \varphi_{2}''(x),
\]

\[
k_{12}(x) = k_{21}(x) = \frac{1}{2} \left( \frac{\partial^{2} \varphi_1}{\partial x_{1}^{2}} \, 
\frac{\partial^{2} \varphi_2}{\partial x_{2}^{2}} +
\frac{\partial^{2} \varphi_1}{\partial x_{2}^{2}} \, \frac{\partial^{2} \varphi_2}{\partial x_{1}^{2}} - 
2 \frac{\partial^{2} \varphi_1}{\partial x_{1} \partial x_{2}} \, \frac{\partial^{2} \varphi_2}{\partial x_{1} \partial x_{2}} \right)(x).
\]
\\
From H1 it follows that the functions $k_{ij}$, $1 \leq i,j \leq 2$, are real analytic on $V^{a,b}$, and from H2 the $k_{ij}$'s result homogeneous of degree $\beta = 2(m - \alpha_1 - \alpha_2)$, i.e.: for every $1 \leq i,j \leq 2$, $k_{ij}(t \bullet x) = t^{\beta} k_{ij}(x)$ 
for $x \in V^{a.b}$, and $t > 0$. It is easy to check that the eigenvalues $\Lambda_1(x)$ and $\Lambda_2(x)$ of $K(x)$ are given by
\[
\Lambda_1(x) = \frac{1}{2} \left( k_{11}(x) + k_{22}(x) - \sqrt{4 k_{12}^{2}(x) + (k_{11}(x) - k_{22}(x))^{2}} \right),
\]
and
\[
\Lambda_2(x) = \frac{1}{2} \left( k_{11}(x) + k_{22}(x) + \sqrt{4 k_{12}^{2}(x) + (k_{11}(x) - k_{22}(x))^{2}} \right).
\]

\

The homogeneity of $\Lambda_1$ and $\Lambda_2$ follow from that of the entries $k_{ij}$. We observe that for every $s_0 \in (a,b)$ fixed and $\delta$ positive and small enough, each $\Lambda_i(1, \, \cdot \,)$ is real analytic on $(s_0 - \delta, s_0 + \delta)$. Indeed, this is clear if either $k_{12}(1, s_0) \neq 0$ or $k_{11}(1, s_0) \neq k_{22}(1, s_0)$. Now, if $k_{12}(1, s_0) = 0$ and 
$k_{11}(1, s_0) - k_{22}(1, s_0) = 0$, we consider the function

\begin{equation} \label{kaes}
f(s) = 4 k_{12}^{2}(1,s) + (k_{11}(1,s) - k_{22}(1,s))^{2}.
\end{equation}
\\
If $f$ is identically zero on $(s_0 - \delta, s_0 + \delta)$ for some $\delta > 0$, then $\Lambda_1(1,s) = \Lambda_2(1,s) = k_{11}(1,s)$. So that each $\Lambda_i(1, \, \cdot \,)$ is real analytic on $(s_0 - \delta, s_0 + \delta)$. If $f$ is not identically zero, since $f \geq 0$ and
$f(s_0) = 0$, from the analyticity of the entries $k_{ij}$ it follows that there exist $\delta >0$, a positive integer $q$, and a nonnegative real analytic function $h$ such that $h(s_0) > 0$ and $f(s) = (s - s_0)^{2q} [h(s)]^{2}$ for all $s \in (s_0 - \delta, s_0 +\delta)$. Now, for $i=1,2$, we set

\[
\widetilde{\Lambda}_i(1,s) = \frac{1}{2} \left( k_{11}(1,s) + k_{22}(1,s) + (-1)^{i} (s - s_0)^{q} h(s) \right).
\]
\\
It is clear that the functions $\widetilde{\Lambda}_i(1, \, \cdot \,)$ are real analytic on $(s_0 - \delta, s_0 +\delta)$. If $q$ is even, then $\widetilde{\Lambda}_i(1,s) = \Lambda_i(1,s)$ for $s \in (s_0 - \delta, s_0 +\delta)$ and $i=1,2$. If $q$ is odd, then 
$\widetilde{\Lambda}_1(1,s) = \Lambda_1(1,s)$ for $s \in [s_0, s_0 +\delta)$, and $\widetilde{\Lambda}_1(1,s) = \Lambda_2(1,s)$ for 
$s \in (s_0 - \delta, s_0)$ (similar identities are obtained for $\widetilde{\Lambda}_2(1, \, \cdot \,)$). Thus, the functions 
$x \to \widetilde{\Lambda}_i(x)$, $i=1,2$, give the eigenvalues of $K(x)$. From this analysis and the homogeneity of $\Lambda_1$ and 
$\Lambda_2$ we obtain its analyticity on $V^{a,b}$. Hence, $(i)$ follows.

To see $(ii)$, we observe that each point in $V^{c, \sigma}$ (resp. in $V^{\sigma, d}$) is elliptic and so, by Lemma \ref{elliptic point}, 
$\Lambda_1$ and $\Lambda_2$ have the same sign in $V^{c, \sigma}$ (resp. in $V^{\sigma, d}$). Then, $(ii)$ follows from the fact that 
$\Lambda_1 \leq \Lambda_2$ on $V^{c, \sigma}$ (resp. on $V^{\sigma, d}$).

Finally, Lemma \ref{elliptic point}, $(i)$ and $(ii)$ allow us to get $(iii)$ and $(iv)$. 
\end{proof}

\begin{lemma} \label{Gs}
Let $G : S^{1} \times (a,b) \to \mathbb{R}$ be the function given by $G(\zeta,s) = \langle \varphi(1,s), \zeta \rangle$. Then,

\begin{equation} \label{G function}
\alpha_{1}^{2} \det \left( \zeta_1 \varphi_{1}''(1,s) + \zeta_2 \varphi_{2}''(1,s) \right) =
\end{equation}
\[
m(m-\alpha_1)(G G_{ss})(\zeta,s) + \alpha_2 (\alpha_1 - \alpha_2) s (G_s G_{ss})(\zeta,s) - (m - \alpha_2)^{2} G_{s}^{2}(\zeta,s),
\]
\\
for every $\zeta =(\zeta_1, \zeta_2) \in S^{1}$ and every $s \in (a,b)$.
\end{lemma}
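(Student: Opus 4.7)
The plan is to compute the entries of the matrix
$H(s) := \zeta_1\varphi_1''(1,s)+\zeta_2\varphi_2''(1,s)$
and express each of them in terms of $G$, $G_s$ and $G_{ss}$, using the nonisotropic Euler identities coming from H2. Note first that, tautologically, the $(2,2)$-entry of $H(s)$ equals $G_{ss}(\zeta,s)$, since $\partial_s^2 \varphi_j(1,s)=\partial_2^2\varphi_j(1,s)$. Thus the whole game is to write the $(1,1)$- and $(1,2)$-entries — which involve derivatives with respect to $x_1$ — in terms of $s$-derivatives.

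The mechanism is as follows. Differentiating the homogeneity identity $\varphi_j(t\bullet x)=t^m\varphi_j(x)$ at $t=1$ gives Euler's relation
\[
\alpha_1 x_1\,\partial_1\varphi_j(x)+\alpha_2 x_2\,\partial_2\varphi_j(x)=m\,\varphi_j(x).
\]
Applying $\partial_{x_1}$ and $\partial_{x_2}$ to this identity and evaluating at $x=(1,s)$ yields
\[
\alpha_1\partial_1^2\varphi_j(1,s)+\alpha_2 s\,\partial_1\partial_2\varphi_j(1,s)=(m-\alpha_1)\partial_1\varphi_j(1,s),
\]
\[
\alpha_1\partial_1\partial_2\varphi_j(1,s)+\alpha_2 s\,\partial_2^2\varphi_j(1,s)=(m-\alpha_2)\partial_2\varphi_j(1,s).
\]
The second relation (multiplied by $\zeta_j$ and summed over $j$) directly gives
\[
\alpha_1 H_{12}(s)=(m-\alpha_2)G_s(\zeta,s)-\alpha_2 s\,G_{ss}(\zeta,s).
\]
For $H_{11}$, I would eliminate $\partial_1\varphi_j(1,s)$ via Euler's relation itself (namely $\alpha_1\partial_1\varphi_j(1,s)=m\varphi_j(1,s)-\alpha_2 s\,\partial_2\varphi_j(1,s)$) and substitute the expression already obtained for $\partial_1\partial_2\varphi_j(1,s)$; after summing against $\zeta_j$ this yields
\[
\alpha_1^2 H_{11}(s)=m(m-\alpha_1)G-\alpha_2 s(2m-\alpha_1-\alpha_2)G_s+\alpha_2^2 s^2 G_{ss}.
\]

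Finally, the determinant equals $H_{11}H_{22}-H_{12}^2$, so I would compute
\[
\alpha_1^2\det H = \alpha_1^2 H_{11}G_{ss}-(\alpha_1 H_{12})^2,
\]
substitute the two boxed expressions and expand. The $\alpha_2^2 s^2 G_{ss}^2$ terms cancel, and collecting the $G_sG_{ss}$ coefficient gives $\alpha_2 s[-(2m-\alpha_1-\alpha_2)+2(m-\alpha_2)]=\alpha_2(\alpha_1-\alpha_2)s$, while the $GG_{ss}$ and $G_s^2$ coefficients reproduce exactly $m(m-\alpha_1)$ and $-(m-\alpha_2)^2$. No serious obstacle is expected beyond careful bookkeeping of the linear combinations; the only conceptual ingredient is the iterated use of the nonisotropic Euler relation from H2.
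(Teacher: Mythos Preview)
Your proposal is correct and follows essentially the same approach as the paper: both use the nonisotropic Euler relations (and their first derivatives) coming from H2 to express the entries of the Hessian $\zeta_1\varphi_1''(1,s)+\zeta_2\varphi_2''(1,s)$ in terms of $G$, $G_s$, $G_{ss}$, and then compute the $2\times 2$ determinant. The paper packages the argument via $\Psi(x)=\langle\varphi(x),\zeta\rangle$ rather than summing the relations for $\varphi_j$ against $\zeta_j$, but this is only a notational difference; your explicit expansion of $\alpha_1^2 H_{11}$ and the final cancellation is in fact more detailed than what the paper writes out.
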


\begin{proof} For $\zeta \in S^{1}$ fixed, let $\Psi(x)= \langle \varphi(x), \zeta \rangle$, for $x \in V^{a,b}$. Clearly, the function $\Psi$ is homogeneous of degree $m$ respect to the dilations in (\ref{dilation}) and so, from the Euler equations, 
\[
m\Psi(x) = \alpha_1 x_1 \Psi_{x_1}(x) + \alpha_2 x_2 \Psi_{x_2}(x)
\]
\[
(m-\alpha_1)\Psi_{x_1}(x) = \alpha_1 x_1 \Psi_{x_1 x_1}(x) + \alpha_2 x_2 \Psi_{x_1 x_2}(x)
\]
\[
(m-\alpha_2)\Psi_{x_2}(x) = \alpha_1 x_1 \Psi_{x_1 x_2}(x) + \alpha_2 x_2 \Psi_{x_2 x_2}(x).
\]
Thus
\begin{equation} \label{G1}
mG(\zeta, s) = \alpha_1 \Psi_{x_1}(1,s) + s \alpha_2 \Psi_{x_2}(1,s) =
\end{equation}
\[
\frac{\alpha_{1}^{2}}{m-\alpha_1} \Psi_{x_1 x_1}(1,s) + \frac{\alpha_{2}^{2} s^{2}}{m-\alpha_2} \Psi_{x_2 x_2}(1,s) +
\left(\frac{\alpha_{1} \alpha_{2}}{m-\alpha_1} + \frac{\alpha_{1} \alpha_{2}}{m-\alpha_2} \right) s \, \Psi_{x_1 x_2}(1,s);  
\]
\\
we also have that
\begin{equation} \label{G2}
\Psi_{x_2}(1,s) = G_{s}(\zeta, s) \,\,\,\, \text{and} \,\,\,\, \Psi_{x_2 x_2}(1,s) = G_{ss}(\zeta, s),
\end{equation}
and so
\[
(m-\alpha_2) \, G_{s}(\zeta, s) = \alpha_1 \Psi_{x_1 x_2}(1,s) + \alpha_2 \, s \, G_{ss}(\zeta, s).
\]
Then
\begin{equation} \label{G3}
\Psi_{x_1 x_2}(1,s) = \frac{m-\alpha_2}{\alpha_1} G_{s} (\zeta,s) - \frac{\alpha_2}{\alpha_1} s \, G_{ss}(\zeta,s),
\end{equation}
from (\ref{G1}), using (\ref{G2}) and (\ref{G3}) we can express $\Psi_{x_1 x_1}(1,s)$ in terms of $G(\zeta,s)$, $G_{ss}(\zeta,s)$ and 
$G_{ss}(\zeta,s)$. Taking into account this expression, and also (\ref{G2}) and (\ref{G3}), a computation of $\det(\Psi''(1,s))$ gives
(\ref{G function}).
\end{proof}

\begin{remark} \label{lower bound}
Let $n_1$, $n_2$, $D$ and $\tilde{D}$ be as in Lemma \ref{eigen functions}. For $(\zeta, s) \in S^{1} \times (a,b)$, let

\[
H(\zeta, s) = m(m-\alpha_1) (GG_{ss})(\zeta,s) + \alpha_2 (\alpha_1 - \alpha_2) s (G_s G_{ss})(\zeta,s) - (m - \alpha_2)^{2} 
G_{s}^{2}(\zeta,s).
\]
\\
From (\ref{Quad}), and Lemmas \ref{elliptic point}, \ref{eigen functions} and \ref{Gs} we have that 

\

\, $(i)$ For $\delta > 0$ and small enough, $|H(\zeta,s)| \geq D|s- \sigma|^{n_1}$ for all $s \in (\sigma - \delta, \sigma)$ and all $\zeta \in S^{1}$, if $\sigma \in (c,d]$.

\, $(ii)$ For $\delta > 0$ and small enough, $|H(\zeta,s)| \geq D|s- \sigma|^{n_2}$ for all $s \in (\sigma, \sigma + \delta)$ and all 
$\zeta \in S^{1}$, if $\sigma \in [c,d)$.

\, $(iii)$ If $I \subset [c,d] \setminus \{ \sigma \}$ is a closed interval, then $|H(\zeta,s)| \geq \tilde{D}$ for all $s \in I$ and all 
$\zeta \in S^{1}$.
\end{remark}

\section{An estimate for $\widehat{\mu}$}

For an open set $U \subset \mathbb{R}^{2}$, we put $\mu = \mu_U$ for the measure defined by (\ref{mu}). For $[c,d] \subset (a,b)$ such that 
$\sigma \in [c,d]$ (see hypothesis H3), we take the open set $U=V_{1}^{c, d}$ given by (\ref{set Vcd1}). Then, the change of variable 
$x_1 = t^{\alpha_1}$, $x_2 = s t^{\alpha_2}$ gives

\begin{equation} \label{muhat}
\widehat{\mu} (\xi', \xi'') = \alpha_1 \int_{c}^{d} \left( \int_{0}^{1} e^{-i(\xi_1 t^{\alpha_1} + \xi_2 s 
t^{\alpha_2} + t^{m} \langle \varphi(1,s), \xi'' \rangle)} \, t^{\alpha_1 + \alpha_2 -1} \,  dt  \right) \, ds,
\end{equation}
\\
where $\xi'=(\xi_1, \xi_2)$ and $\xi''=(\xi_3, \xi_4)$.

\

The following lemmas will allow us to give a decay estimate for (\ref{muhat}).

\begin{lemma} \label{int osc} 
Let $A_j \in \mathbb{R}$, for $j=1,2,3$. Then there exists a positive constant $C$ such that 
\begin{equation} \label{int osc1}
\left|  \int_{0}^{1} e^{-i(A_1 t^{\alpha_1} + A_2 t^{\alpha_2} + A_3 t^{m})} \, t^{\alpha_1 + \alpha_2 -1} \, dt \right| \leq 
C |A_3|^{-\frac{\alpha_1 + \alpha_2}{m}}.
\end{equation}
\end{lemma}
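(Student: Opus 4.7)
The plan is to rescale out the large parameter $A_3$ and then establish a uniform boundedness statement for the remaining integral. When $|A_3| \leq 1$ the bound is immediate from the trivial estimate $\int_0^1 t^{\alpha_1+\alpha_2-1}\,dt = (\alpha_1+\alpha_2)^{-1}$, since $|A_3|^{-(\alpha_1+\alpha_2)/m} \geq 1$. Suppose $|A_3| > 1$; the substitution $u = |A_3|^{1/m} t$ turns the left-hand side of \eqref{int osc1} into
\[
|A_3|^{-(\alpha_1+\alpha_2)/m} \int_0^R e^{-i\Psi(u)}\, u^{\alpha_1+\alpha_2-1}\, du,
\]
with $R = |A_3|^{1/m}$, $\epsilon = \operatorname{sgn}(A_3)$, $B_i = A_i |A_3|^{-\alpha_i/m}$, and $\Psi(u) = B_1 u^{\alpha_1} + B_2 u^{\alpha_2} + \epsilon u^m$. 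The task then reduces to bounding the rescaled integral uniformly in $B_1, B_2 \in \mathbb{R}$, $\epsilon \in \{\pm 1\}$, and $R \geq 1$.

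Split $\int_0^R = \int_0^1 + \int_1^R$. The first piece is controlled by $(\alpha_1+\alpha_2)^{-1}$. For the second, $\Psi'(u) = \alpha_1 B_1 u^{\alpha_1-1} + \alpha_2 B_2 u^{\alpha_2-1} + \epsilon m u^{m-1}$, and similarly $\Psi''$, is a three-term sum of monomials with pairwise distinct exponents; the substitution $u = e^v$ turns each into a sum of three exponentials, so iterated Rolle's theorem forces at most two real zeros. Hence $[1,R]$ decomposes into a uniformly bounded number of subintervals on which $\Psi'$ has constant sign and is monotonic. On any subinterval where $u$ exceeds the transition scales $|B_i|^{1/(m-\alpha_i)}$, the $\epsilon m u^{m-1}$ term dominates and $|\Psi'(u)| \gtrsim u^{m-1}$; integration by parts against $u^{\alpha_1+\alpha_2-1}$ then produces boundary and remainder terms of order $u^{\alpha_1+\alpha_2-m}$, which are uniformly integrable on $[1,\infty)$ precisely because $m > \alpha_1+\alpha_2$.

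In the complementary regions one of the lower-order monomials of $\Psi'$ dominates; I would integrate by parts against that dominant term, or, in short neighborhoods of the (at most two) zeros of $\Psi'$, apply van der Corput of order two using the lower bound on $|\Psi''|$ available there. The main technical obstacle I anticipate is precisely the bookkeeping in this intermediate regime, where both integration by parts and van der Corput must be matched carefully across the critical points of $\Psi'$. Summing the resulting dyadic estimates demands the margin $m \geq 3(\alpha_1+\alpha_2)$ built into H2 rather than the weaker $m > \alpha_1+\alpha_2$, as this is what forces the exponents in each dyadic contribution to be negative enough for the geometric series to converge uniformly in $B_1$ and $B_2$. Once these pieces are assembled one obtains $\bigl|\int_0^R e^{-i\Psi(u)} u^{\alpha_1+\alpha_2-1} du\bigr| \leq C$, and hence \eqref{int osc1}.
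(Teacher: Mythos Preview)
Your rescaling and reduction to bounding $\int_1^R e^{-i\Psi(u)}\,u^{\alpha_1+\alpha_2-1}\,du$ uniformly is correct in spirit, but the sketch has a genuine gap at the step where you invoke ``van der Corput of order two using the lower bound on $|\Psi''|$ available there.'' No such uniform lower bound exists: since $B_1,B_2$ range over all of $\mathbb{R}$, for any given $u_0>0$ one can solve the two equations $\Psi'(u_0)=\Psi''(u_0)=0$ for $B_1,B_2$ (the relevant $2\times 2$ coefficient matrix is Vandermonde-like and invertible because $\alpha_1\neq\alpha_2$). For those parameters the critical point of $\Psi$ is degenerate and second-order van der Corput gives nothing. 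You must go to the third derivative; this is precisely where the hypothesis $m\ge 3(\alpha_1+\alpha_2)$ enters, not in the convergence of a dyadic sum as you suggest.

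The paper's proof avoids both this issue and all of your weight bookkeeping by choosing a better substitution: instead of $u=|A_3|^{1/m}t$ it sets $\tau=A_3^{(\alpha_1+\alpha_2)/m}t^{\alpha_1+\alpha_2}$, so that $t^{\alpha_1+\alpha_2-1}\,dt$ becomes $(\alpha_1+\alpha_2)^{-1}A_3^{-(\alpha_1+\alpha_2)/m}\,d\tau$ and the problem reduces to an \emph{unweighted} oscillatory integral with phase $\Phi(\tau)=a\tau^{\gamma_1}+b\tau^{\gamma_2}+\tau^{\gamma_3}$, where $\gamma_j=\alpha_j/(\alpha_1+\alpha_2)$ and $\gamma_3=m/(\alpha_1+\alpha_2)\ge 3$. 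It then uses a clean linear-algebra identity: since the $3\times 3$ matrix $\bigl[\gamma_i(\gamma_i-1)\cdots(\gamma_i-j+1)\bigr]_{1\le i,j\le 3}$ is invertible, one can write $\tau^{\gamma_3}=\sum_{j=1}^3 c_j\,\tau^j\Phi^{(j)}(\tau)$, whence for every $\tau\ge 1$ at least one $j\in\{1,2,3\}$ satisfies $|\Phi^{(j)}(\tau)|\ge 1/(3|c_j|)$. Each of the three resulting sets has boundedly many components (by Rolle applied twice), and van der Corput of order $j\le 3$ finishes the estimate. The essential idea you are missing is this systematic use of all three derivatives with a uniform, parameter-independent lower bound; your integration-by-parts-plus-order-two scheme cannot replace it.
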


\begin{proof} Without loss of generality we can assume $A_3 > 0$. The change of variable  
$\tau = A_{3}^{\frac{\alpha_1+\alpha_2}{m}}t^{\alpha_1+\alpha_2}$ allows us to express the integral in (\ref{int osc1}) as
\[
\frac{1}{(\alpha_1+\alpha_2)A_3^{\frac{\alpha_1+\alpha_2}{m}}} \int_{0}^{A_{3}^{\frac{\alpha_1+\alpha_2}{m}}} e^{-i(a \tau^{\gamma_1} + b \tau^{\gamma_2} + \tau^{\gamma_3})} \, d\tau,
\]
with $a:=A_1 A_3^{-\frac{\alpha_1}{m}}$, $b:= A_2 A_3^{-\frac{\alpha_2}{m}}$, $\gamma_j := \frac{\alpha_j}{\alpha_1+\alpha_2}$ for $j=1,2$ 
and $\gamma_3:= \frac{m}{\alpha_1+\alpha_2}$. Let $\Phi(\tau) = a \tau^{\gamma_1} + b \tau^{\gamma_2} + \tau^{\gamma_3}$. It is clear that
$0 < \gamma_1, \gamma_2 < 1$ and $\gamma_3 \geq 3$ (see hyp. H2), so $\Phi$ do not reduce to zero after from taking three derivatives. To prove the lemma it is enough to see that there exists a positive constant $C$ independent of $a$, $b$, and $A_3$ such that for all $B > 1$
\begin{equation} \label{int osc2}
\left| \int_{1}^{B} e^{-i \Phi(\tau)} \, d\tau \right| \leq C.
\end{equation}
For $\gamma \in \mathbb{R}$ and $j \in \mathbb{N}$, we set $\gamma(j) = \gamma (\gamma - 1) \cdot \cdot \cdot (\gamma - j +1)$. We observe that $\tau^{j} \Phi^{(j)}(\tau) = \gamma_{1}(j) a \tau^{\gamma_1} + \gamma_{2}(j) b \tau^{\gamma_2} + \gamma_{3}(j) \tau^{\gamma_3}$, 
$j=1,2,3$, and that the matrix equation
\begin{gather*}
 \begin{bmatrix} \gamma_1(1) & \gamma_1(2) & \gamma_1(3) \\ \gamma_2(1) & \gamma_2(2) & \gamma_2(3) \\ \gamma_3(1) & \gamma_3(2) & 
\gamma_3(3)  \end{bmatrix}
\begin{bmatrix} c_1 \\ c_2 \\ c_3 \end{bmatrix}
 = \begin{bmatrix} 0 \\ 0 \\ 1 \end{bmatrix}
\end{gather*}
has unique solution $(c_1, c_2, c_3)$ (since $\det \left[ \gamma_i(j) \right] = \gamma_1 \gamma_2 \gamma_3(\gamma_2 - \gamma_1)
(\gamma_3 - \gamma_1) (\gamma_3 - \gamma_2) \neq 0$). Thus $\tau^{\gamma_3} = \sum_{j=1}^{3} c_j \tau^{j} \Phi^{(j)}(\tau)$ and so
$\tau^{\gamma_3} \leq \sum_{j=1}^{3} |c_j \tau^{j} \Phi^{(j)}(\tau)|$ for all $\tau \geq 1$. Then, $(1, B) \subset \cup_{j=1}^{3} I_j$ 
where $I_j = \{ \tau \in (1,B) : |\Phi^{(j)}(\tau)| \geq 1/3|c_j| \}$. Since $\frac{d}{d\tau} \left( \tau^{\gamma_1-\gamma_2+1} 
\frac{d}{d\tau} \left( \tau^{-\gamma_1 + j +1} \frac{d}{d\tau} \Phi^{(j)}(\tau) \right) \right) \neq 0$ for all $\tau \geq 1$ and each 
$j=1,2,3$, the Rolle's Theorem applied twice implies that, for each $c \in \mathbb{R}$, the equation $\Phi^{(j)}(\tau) =c$ has at most 
three solutions. Hence, each $I_j$ has at most six connected components. Being $m \geq 3(\alpha_1+\alpha_2)$ we have $\gamma_3 \geq 3$, 
so (\ref{int osc2}) follows from the Van der Corput lemma (see e.g. \cite{Stein3}, p. 332).
\end{proof}

Next, we define three functions $f_i : S^{1} \times (a,b) \to \mathbb{R}$, $i=1,2,3$, by

\[
f_1(\zeta, s) = m(m-\alpha_1) \, G(\zeta, s) \, G_{ss}(\zeta, s),
\]
\begin{equation} \label{f123}
f_2(\zeta, s) = (m-\alpha_2)^{2} \, G_{s}^{2}(\zeta, s),
\end{equation}
\[
f_3(\zeta, s) =  \alpha_2(\alpha_1-\alpha_2) \, s \, G_{s}(\zeta, s) \, G_{ss}(\zeta, s),
\]
\\
and for $\delta > 0$ and small, $\zeta \in S^{1}$ and $C > 0$ let

\begin{equation} \label{Ijotas}
\left\{\begin{array}{c}
I_{1}^{\delta, \zeta, C} = \{s \in (\sigma - \delta, \sigma) : f_1 (\zeta, s) > \frac{C}{4}|s-\sigma|^{n_1} \},\\\\
I_{2}^{\delta, \zeta, C} = \{s \in (\sigma - \delta, \sigma) : f_1 (\zeta, s) < -\frac{C}{4}|s-\sigma|^{n_1} \},\\\\
I_{3}^{\delta, \zeta, C} = \{s \in (\sigma - \delta, \sigma) : f_2 (\zeta, s) > \frac{C}{4}|s-\sigma|^{n_1} \},\\\\
I_{4}^{\delta, \zeta, C} = \{s \in (\sigma - \delta, \sigma) : f_3 (\zeta, s) > \frac{C}{4}|s-\sigma|^{n_1} \},\\\\
I_{5}^{\delta, \zeta, C} = \{s \in (\sigma - \delta, \sigma) : f_3 (\zeta, s) < -\frac{C}{4}|s-\sigma|^{n_1} \}.
\end{array} \right.
\end{equation}

\

For $\delta > 0 $ and $i=1,2,3,4,5$, let $J_{i}^{\delta, \zeta, C}$ be the analogous sets defined again by (\ref{Ijotas}), but replacing there $(\sigma - \delta, \sigma)$ and $n_1$ by  $(\sigma, \sigma + \delta)$ and $n_2$ respectively. Finally, for 
$\tau \in [c,d] \setminus \{ \sigma\}$ such that $\sigma \notin (\tau - \delta, \tau + \delta)$ and 
$i=1,2,3,4,5$, let $K_{i}^{\tau, \delta, \zeta, C}$ be the sets defined by (\ref{Ijotas}), but replacing there 
$(\sigma - \delta, \sigma)$ and $\frac{C}{4} |s - \sigma|^{n_1}$ by $(\tau - \delta, \tau + \delta) \cap [c,d]$ and $\frac{C}{4}$ respectively.

\begin{lemma} \label{CeW} $(i)$ For every $\eta \in S^{1}$ there exist two positive constants $\delta_{\eta}$ and $C_{\eta}$ and a neighborhood $W_{\eta}$ of $\eta$ in $S^{1}$ such that, for all $\zeta \in W_{\eta}$ and all $j=1,2, ...,5$, the sets 
$I_{j}^{\delta_{\eta}, \zeta, C_{\eta}}$ and $J_{j}^{\delta_{\eta}, \zeta, C_{\eta}}$ have at most $n_1$ and $n_2$ connected components respectively.

$(ii)$ For every $\tau \in [c,d] \setminus \{ \sigma \}$ and $\eta \in S^{1}$ there exist two positive constants $\delta_{\eta, \tau}$ and 
$C_{\eta, \tau}$ and a neighborhood $W_{\eta, \tau}$ of $\eta$ in $S^{1}$ such that, for all $\zeta \in W_{\eta, \tau}$ and all $j=1, ...,5$, either $K_{j}^{\tau, \delta_{\eta, \tau}, \, \zeta, \, C_{\eta, \tau}} = \emptyset$ or 
$K_{j}^{\tau, \delta_{\eta, \tau}, \, \zeta, \, C_{\eta, \tau}} = (\tau - \delta_{\eta, \tau}, \tau + \delta_{\eta, \tau}) \cap [c,d]$.
\end{lemma}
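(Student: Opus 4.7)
The plan is to exploit the real analyticity in $(\zeta,s)$ of the five defining functions for the sets $I_j$, $J_j$, and $K_j$: each such set is of the form $\{F(\zeta,s)>0\}$ where $F$ is obtained by taking $\pm f_j$ and subtracting either $\tfrac{C}{4}|s-\sigma|^{n_1}$ (for $I_j$), $\tfrac{C}{4}|s-\sigma|^{n_2}$ (for $J_j$), or the constant $\tfrac{C}{4}$ (for $K_j$). For part (i) the argument will rest on the Weierstrass preparation theorem applied at $(\eta,\sigma)$, giving a factorization of each defining function as an analytic unit times a monic polynomial of degree at most $n_1$ in $s-\sigma$; this will bound both the number of real zeros and the number of connected components of the positivity set by $n_1$. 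For part (ii), since $\tau\neq\sigma$ the defining inequalities reduce to comparisons of continuous functions with a constant at a base point where the comparison is nondegenerate, so a continuity argument suffices.

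For part (i), fix $\eta\in S^{1}$ and consider, for instance, $F(\zeta,s;C)=f_{1}(\zeta,s)-\tfrac{C}{4}(\sigma-s)^{n_{1}}$ (the function governing $I_{1}^{\delta,\zeta,C}$), which is real analytic on $S^{1}\times(a,b)$ by H1. Expand $f_{j}(\eta,s)=\sum_{n\ge 0}a_{n}^{j,\eta}(\sigma-s)^{n}$ for $j=1,2,3$, and choose $C_{\eta}>0$ outside the finite set $\{\pm 4\,a_{n_{1}}^{j,\eta}:j=1,2,3\}$. With this choice, the first nonzero Taylor coefficient at $s=\sigma$ of each of the five defining functions $F(\eta,\cdot;C_{\eta})$ occurs at some index $m\le n_{1}$: either some $a_{n}^{j,\eta}\neq 0$ for an $n<n_{1}$ (so $m<n_{1}$), or all lower-order coefficients vanish and the coefficient of $(\sigma-s)^{n_{1}}$ equals $\pm a_{n_{1}}^{j,\eta}-C_{\eta}/4\neq 0$ (so $m=n_{1}$). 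In particular $F(\eta,\cdot;C_{\eta})\not\equiv 0$ near $\sigma$. The Weierstrass preparation theorem then furnishes a neighborhood $W_{\eta}\times(\sigma-\delta_{\eta},\sigma+\delta_{\eta})$ of $(\eta,\sigma)$, an analytic nonvanishing unit $u(\zeta,s)$, and a Weierstrass polynomial $P(\zeta,y)=y^{m}+\sum_{i=0}^{m-1}c_{i}(\zeta)\,y^{i}$ of degree $m\le n_{1}$ with $c_{i}$ analytic and $c_{i}(\eta)=0$, such that $F(\zeta,s;C_{\eta})=u(\zeta,s)\,P(\zeta,s-\sigma)$ on this neighborhood. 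For each $\zeta\in W_{\eta}$ the polynomial $P(\zeta,\cdot)$ has at most $n_{1}$ real roots, hence $F(\zeta,\cdot;C_{\eta})$ has at most $n_{1}$ zeros in $(\sigma-\delta_{\eta},\sigma)$, and the open set $I_{1}^{\delta_{\eta},\zeta,C_{\eta}}$ has at most $n_{1}$ connected components. The other four $I_{j}$'s are treated identically, and the $J_{j}$'s on $(\sigma,\sigma+\delta_{\eta})$ follow by the same argument with $n_{1}$ replaced by $n_{2}$.

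For part (ii), fix $\tau\in[c,d]\setminus\{\sigma\}$ and $\eta\in S^{1}$. The five defining functions for $K_{j}^{\tau,\delta,\zeta,C}$ now take the form $\pm f_{j}(\zeta,s)-C/4$, i.e.\ comparisons with the \emph{constant} $C/4$ rather than with a quantity vanishing at the base point. Choose $C_{\eta,\tau}>0$ avoiding the finite set $\{4|f_{1}(\eta,\tau)|,\,4f_{2}(\eta,\tau),\,4|f_{3}(\eta,\tau)|\}$, so that each of the five strict inequalities either holds or fails at $(\eta,\tau)$. By continuity of the $f_{j}$, there exist a neighborhood $W_{\eta,\tau}$ of $\eta$ in $S^{1}$ and $\delta_{\eta,\tau}>0$ with $\sigma\notin(\tau-\delta_{\eta,\tau},\tau+\delta_{\eta,\tau})$ on which each strict inequality is preserved uniformly for all $(\zeta,s)\in W_{\eta,\tau}\times(\tau-\delta_{\eta,\tau},\tau+\delta_{\eta,\tau})$. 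Consequently each $K_{j}^{\tau,\delta_{\eta,\tau},\zeta,C_{\eta,\tau}}$ is either empty or equal to $(\tau-\delta_{\eta,\tau},\tau+\delta_{\eta,\tau})\cap[c,d]$.

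The main technical point is the parameter-dependent Weierstrass preparation in part (i): one needs a single neighborhood $W_{\eta}\times(\sigma-\delta_{\eta},\sigma+\delta_{\eta})$ on which all five factorizations hold simultaneously, and a careful transition from the ``at most $n_{1}$ real zeros of $P(\zeta,\cdot)$'' bound to the ``at most $n_{1}$ connected components'' bound. The tight bound $n_{1}$, rather than the naive $n_{1}+1$, uses that $\sigma$ is a boundary point of the open interval $(\sigma-\delta_{\eta},\sigma)$ and that the leading-order behavior of $F(\zeta,\cdot;C_{\eta})$ near $\sigma$ is governed by the sign of the leading Weierstrass coefficient, so the would-be rightmost sign-constant subinterval is absorbed by the boundary.
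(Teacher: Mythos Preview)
Your argument is correct and reaches the same conclusion as the paper, but the mechanism in part~(i) is genuinely different. The paper proceeds more elementarily: it sets $k_i$ equal to the order of vanishing of $f_i(\eta,\cdot)$ at $\sigma$, takes $C_\eta=\min\{|\tfrac{1}{k_i!}\partial_s^{k_i}f_i(\eta,\sigma)|:k_i\le n_1\}$ explicitly, and checks that with this choice the $k_i$-th (respectively $n_1$-th, when $k_i>n_1$) $s$-derivative of each $F_i(\zeta,s)=f_i(\zeta,s)\mp\tfrac{C_\eta}{4}(s-\sigma)^{n_1}$ is nonzero at $(\eta,\sigma)$; continuity extends this to a product neighborhood, and Rolle's theorem then bounds the zeros of $F_i(\zeta,\cdot)$ on $(\sigma-\delta_\eta,\sigma)$ by $n_1$. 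Your route via the Weierstrass preparation theorem is more structural and equally valid: the generic choice of $C_\eta$ forces the vanishing order of each defining function at $(\eta,\sigma)$ to be at most $n_1$, and the resulting degree-$\le n_1$ Weierstrass polynomial controls the zeros uniformly in $\zeta$. The paper's version is self-contained; yours makes the dependence on $\zeta$ more transparent. (One small omission: to handle the $J_j$'s with the \emph{same} constant $C_\eta$, you should also exclude the values $\pm 4a_{n_2}^{j,\eta}$; this just enlarges the finite exceptional set.) For part~(ii) the two arguments are essentially the same continuity argument, differing only in whether $C_{\eta,\tau}$ is chosen by formula or generically.

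One caveat is worth flagging. Both your argument and the paper's yield at most $n_1$ \emph{zeros} of the defining function on $(\sigma-\delta_\eta,\sigma)$, which a~priori gives only $n_1+1$ connected components of the superlevel set, not $n_1$. Your final paragraph tries to recover the sharp bound via an ``absorption at the boundary'' argument, but this does not work as stated: for $\zeta\neq\eta$ the Weierstrass coefficients $c_i(\zeta)$ need not vanish, so neither the fact that $\sigma$ is an endpoint nor the sign of the leading coefficient forces the rightmost sign-constant subinterval to lie outside $\{F>0\}$. The paper glosses over the same step. Fortunately this is harmless: the only use of the lemma in Theorem~\ref{muhat estimate} is that the number of components is bounded uniformly in $\zeta$, and $n_1+1$ serves that purpose just as well as $n_1$.
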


\begin{proof}
To see $(i)$, we will show, for an appropriate positive constant $C$, that the sets where the $f_i$'s (see (\ref{f123})) are equal to 
$\pm\frac{C}{4} |s -\sigma|^{n_j}$ ($j=1,2$) are boundedly finite, so the boundedness of the number of connected components 
will follow from this. For them, we proceed as follows. Given $f \in C^{\infty}(a,b)$, let $d_f$ be defined by 
\[
d_f = \left\{\begin{array}{cc}
              \min \{ l \geq 0 : f^{(l)}(\sigma) \neq 0 \} & \text{if} \,\, f^{(k)}(\sigma) \neq 0 \,\, \text{for some} \,\, k \geq 0, \\\\
							+\infty  & \,\, \text{otherwise}
\end{array} \right..
\]
For $i=1,2,3$, let $f_i : S^{1} \times (a,b) \to \mathbb{R}$ be the functions given by (\ref{f123}) 
and, for $\eta \in S^{1}$ fixed, let $k_i = d_{f_{i}(\eta, \, \cdot \,)}$. By Remark \ref{lower bound}, $(i)$, we have that
$d_{(f_{1} - f_{2} + f_{3})(\eta, \, \cdot \,)} \leq n_1$ and thus $k_i \leq n_1$ for some $i=1,2,3$. Let 
$A = \{ i \in \{ 1,2,3 \} : k_i \leq n_1 \}$ and for $\eta \in S^{1}$ let 
$C_{\eta} := \min \left\{\left| \frac{1}{k_{i}!} \partial^{k_i}_{s} f_{i}(\eta, \sigma) \right|: i \in A \right\}$, where $\partial_s$ denotes the partial derivative respect to $s$. For 
$(\zeta, s) \in S^{1} \times (a,b)$ let
\[
F_{i}(\zeta, s) = f_{i}(\zeta, s) - \frac{C_{\eta}}{4} (s-\sigma)^{n_1}, \,\,\,\,\,\, i=1,2,3.
\]
If $i \in A$, we have that $\partial^{k_i}_{s} F_{i}(\eta, \sigma) \neq 0$ and so there exists $\delta_{\eta, i} > 0$ and a neighborhood 
$W_{\eta, i}$ of $\eta$ in $S^{1}$ such that $\partial^{k_i}_{s} F_{i}(\zeta, s) \neq 0$ for all 
$(\zeta, s) \in W_{\eta, i} \times (\sigma - \delta_{\eta, i}, \sigma)$. Then, by the Rolle's Theorem for $\zeta \in W_{\eta, i}$ and 
$i \in A$, the equation $f_{i}(\zeta, s) = \frac{C_{\eta}}{4} (s-\sigma)^{n_1}$ has at most $k_i$ roots in $(\sigma - \delta_{\eta, i}, \sigma)$ with $k_i \leq n_1$. Now, if $i \notin A$, then  $\partial^{n_1}_{s} F_{i}(\eta, \sigma) \neq 0$ and so, proceeding as above, we find also in this case $\delta_{\eta, i} > 0$ and $W_{\eta, i}$ such that for $\zeta \in W_{\eta, i}$, 
$f_{i}(\zeta, s)= \frac{C_{\eta}}{4} (s-\sigma)^{n_1}$ has at most $n_1$ roots in $(\sigma - \delta_{\eta, i}, \sigma)$. Similarly, by considering $\widetilde{F}_{i}(\zeta,s)=f_{i} (\zeta,s) + \frac{C_{\eta}}{4}(s-\sigma)^{n_1}$ instead of $F_{i}$, and diminishing 
$W_{\eta, i}$ and $\delta_{\eta, i}$ if necessary, we obtain that for $\zeta \in W_{\eta, i}$,
$f_{i}(\zeta, s)= -\frac{C_{\eta}}{4} (s-\sigma)^{n_1}$ has at most $n_1$ roots in $(\sigma - \delta_{\eta, i}, \sigma)$.
From these facts it is clear that the assertion in $(i)$ holds for the sets $I_{j}^{\delta_{\eta}, \zeta, C_{\eta}}$ if we take
$\delta_{\eta} = \min \{ \delta_{\eta, i} : i=1,2,3 \}$ and $W_{\eta} = \bigcap_{i=1}^{3} W_{\eta, i}$. The proof of the statement for the sets $J_{j}^{\delta_{\eta}, \zeta, C_{\eta}}$ is similar.

To see $(ii)$, let $ \tau \in [c,d] \setminus \{ \sigma \}$ and let $f_i$ be the functions defined above. For a constant $\tilde{D} > 0$ as in Lemma \ref{eigen functions} (\textit{iv}), from Remark \ref{lower bound} $(iii)$, we have that
\[
|f_1(\zeta, \tau) - f_2(\zeta, \tau) + f_3(\zeta, \tau)| \geq \tilde{D},
\]
for all $\zeta \in S^{1}$. For $\eta \in S^{1}$, we define $A=\{ i \in \{1,2,3\}: f_{i}(\eta, \tau) \neq 0\}$ and
$C_{\eta, \tau} := \min \{ \{\tilde{D} \} \cup \{|f_{i}(\eta, \tau)|: i \in A\}\}$. For $(\zeta, s) \in S^{1} \times (a,b)$ and $i=1,2,3$, let
$F_{i}(\zeta, s)=f_{i}(\zeta, s)-\frac{C_{\eta, \tau}}{4}$ and $\widetilde{F}_{i}(\zeta, s)=f_{i}(\zeta, s)+\frac{C_{\eta, \tau}}{4}$.
Thus $F_{i}(\eta, \tau) \neq 0$ and $\widetilde{F}_{i}(\eta, \tau) \neq 0$ for $i=1,2,3$ and so there exist $\delta_{\eta, \tau, i}$  
and a neighborhood $W_{\eta, \tau, i}$ of $\eta$ in $S^{1}$ such that $F_{i}(\zeta, s) \neq 0$ and $\widetilde{F}_{i}(\zeta, s) \neq 0$
for all $\zeta \in W_{\eta, \tau, i}$ and all $s \in (\tau - \delta_{\eta, \tau, i}, \tau + \delta_{\eta, \tau, i})\cap [c,d]$. We set
$W_{\eta, \tau} = \bigcap_{i=1}^{3} W_{\eta, \tau, i}$ and $\delta_{\eta, \tau} = \min \{ \delta_{\eta, \tau, i} : i=1,2,3 \}$.
Since for $\zeta \in W_{\eta, \tau}$ and $i=1,2,3$, the equations $f_{i}(\zeta, s) = \pm \frac{C_{\eta, \tau}}{4}$ have not roots in 
$(\tau - \delta_{\eta, \tau}, \tau + \delta_{\eta, \tau}) \cap [c,d]$ it follows, for every $j$, that 
$K_{j}^{\tau, \delta_{\eta, \tau}, \zeta, C_{\eta, \tau}} = \emptyset$ or $K_{j}^{\tau, \delta_{\eta, \tau}, \zeta, C_{\eta, \tau}} =
(\tau - \delta_{\eta, \tau}, \tau + \delta_{\eta, \tau})\cap [c,d]$. This completes the proof.
\end{proof}

\begin{remark} \label{CetaD}
We observe that if $D$ is a positive constant as in Lemma \ref{eigen functions} (\textit{iii}), then for each $\eta \in S^{1}$ we can find a positive constant $C_{\eta} \leq D$ for which Lemma \ref{CeW}, (\textit{i}), still holds. 
\end{remark}

\begin{lemma} \label{ineqs1}
Let $f \in C^{1}([\theta_1, \theta_2])$, $\gamma > 1$ and let $I$ be a closed interval such that $I \subseteq [\theta_1, \theta_2]$. If one of the following two conditions holds
\begin{equation} \label{fprime1}
(f'(s))^{2} \geq A (s - \theta_1)^{\alpha} \,\,\,\, \text{for all} \,\,\, s \in I,
\end{equation}
\begin{equation} \label{fprime2}
(f'(s))^{2} \geq A (\theta_2 - s)^{\alpha} \,\,\,\, \text{for all} \,\,\, s \in I,
\end{equation}
for positive constants $A$ and $\alpha$ with $\alpha < 2\gamma - 2$, then there exists a constant $C > 0$ depending only on $A$,
$\alpha$, $\theta_1$, $\theta_2$ and $\gamma$ such that
\begin{equation} \label{int1}
\int_{I} |f(s)|^{-\frac{1}{\gamma}} ds \leq C.
\end{equation}
\end{lemma}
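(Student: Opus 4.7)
The plan is to treat condition (\ref{fprime1}) in detail; condition (\ref{fprime2}) reduces to it by the symmetric substitution $s\mapsto \theta_1+\theta_2-s$. The strategy has three steps: normalize $f$ so that it is nondecreasing with $f'(s)\geq \sqrt{A}(s-\theta_1)^{\alpha/2}$ on $I$; establish a pointwise bound $|f(s)|\geq c\,|s-s^{*}|^{(\alpha+2)/2}$ at a well-chosen $s^{*}\in I$; and then integrate. For the normalization step, $(f'(s))^{2}\geq A(s-\theta_1)^{\alpha}>0$ for every $s\in I\cap(\theta_1,\theta_2]$, and $f'$ is continuous, so $f'$ has constant sign throughout $I$. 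Replacing $f$ by $-f$ if necessary (which leaves $|f|$ and $(f')^{2}$ unchanged), I may assume $f'\geq 0$, so $f$ is nondecreasing and $f'(s)\geq \sqrt{A}(s-\theta_1)^{\alpha/2}$.

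Because $f$ is monotone on the compact interval $I$, the minimum of $|f|$ over $I$ is attained either at an endpoint of $I$ or at a zero of $f$ inside $I$; fix such a point $s^{*}\in I$. For $s\in I$ with $s\geq s^{*}$, integrating the lower bound on $f'$ gives
\[
f(s)-f(s^{*})\;\geq\;\sqrt{A}\int_{s^{*}}^{s}(t-\theta_1)^{\alpha/2}\,dt \;=\; \frac{2\sqrt{A}}{\alpha+2}\Bigl[(s-\theta_1)^{(\alpha+2)/2}-(s^{*}-\theta_1)^{(\alpha+2)/2}\Bigr],
\]
and the elementary convexity inequality $u^{p}-v^{p}\geq(u-v)^{p}$ (valid for $u\geq v\geq 0$ and $p\geq 1$, applied with $p=(\alpha+2)/2>1$) reduces this to $f(s)-f(s^{*})\geq \frac{2\sqrt{A}}{\alpha+2}(s-s^{*})^{(\alpha+2)/2}$. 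A short case analysis at $s^{*}$---whether $s^{*}$ is a zero of $f$, the left endpoint of $I$ with $f\geq 0$ on $I$, or, symmetrically for $s\leq s^{*}$, the right endpoint with $f\leq 0$ on $I$---shows that $f(s^{*})$ always has the favorable sign, yielding
\[
|f(s)|\;\geq\;\frac{2\sqrt{A}}{\alpha+2}\,|s-s^{*}|^{(\alpha+2)/2}\qquad\text{for all }s\in I.
\]

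Given this bound,
\[
\int_{I}|f(s)|^{-1/\gamma}\,ds\;\leq\;\Bigl(\frac{2\sqrt{A}}{\alpha+2}\Bigr)^{-1/\gamma}\int_{I}|s-s^{*}|^{-(\alpha+2)/(2\gamma)}\,ds,
\]
and the hypothesis $\alpha<2\gamma-2$ forces $(\alpha+2)/(2\gamma)<1$, so the right-hand integral is dominated by $2(\theta_2-\theta_1)^{1-(\alpha+2)/(2\gamma)}/\bigl(1-(\alpha+2)/(2\gamma)\bigr)$. This produces a constant $C$ depending only on $A$, $\alpha$, $\theta_1$, $\theta_2$, $\gamma$, as required. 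I expect the only mildly subtle point to be the case analysis at $s^{*}$: one must track the sign of $f(s^{*})$ so that the bound for $|f|$ descends cleanly from the bound on $f(s)-f(s^{*})$, but this is straightforward once $s^{*}$ is chosen to minimize $|f|$ on $I$.
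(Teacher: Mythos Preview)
Your proof is correct and follows essentially the same route as the paper's: reduce to $f'\geq 0$ by continuity and a sign change, integrate the lower bound $f'(t)\geq \sqrt{A}(t-\theta_1)^{\alpha/2}$, apply the convexity inequality $u^{p}-v^{p}\geq (u-v)^{p}$ with $p=1+\alpha/2$, and then use $\alpha<2\gamma-2$ to make the resulting integral of $|s-s^{*}|^{-(\alpha+2)/(2\gamma)}$ finite. The only cosmetic difference is that the paper explicitly splits into the three cases $f\geq 0$, $f\leq 0$, and $f$ vanishing once in the interior, whereas you package all three into the single choice of $s^{*}$ as a minimizer of $|f|$ on $I$.
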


\begin{proof}
Let $I=[a,b]$. Assume that (\ref{fprime1}) holds. Since $f'$ is continuous we have that either $(i)$ $f'(s) \geq 0$ for all
$s \in I$ or $(ii)$ $f'(s) \leq 0$ for all $s \in I$. Consider the case $(i)$. It is clear that $f'(s) > 0$ for all $s \in (a,b)$, so $f$ is strictly increasing on $[a,b]$. Thus only one of the following three cases occurs

${\bf a)}$ $f$ is positive everywhere in $(a,b]$ with $f(a) \geq 0$,

or

${\bf b)}$ $f$ is negative everywhere in $[a,b)$ with $f(b) \leq 0$,

or

${\bf c)}$ $f$ vanishes at exactly a point $s_0 \in (a,b)$.
\\
If ${\bf a})$ holds, then for every $s \in I$
\[
|f(s)| \geq \int_{a}^{s} f'(t) \, dt \geq \frac{A^{1/2}}{1+\frac{\alpha}{2}} \left( (s- \theta_1)^{1+\frac{\alpha}{2}} - (a-\theta_1)^{1+\frac{\alpha}{2}}  \right) \geq \frac{A^{1/2}}{1+\frac{\alpha}{2}} \left( s-a  \right)^{1+\frac{\alpha}{2}}
\]
which, taking into account that $\alpha < 2\gamma - 2$, implies (\ref{int1}) with the constant $C >0$ required.
\\
If ${\bf b})$ holds, then for every $s \in I$
\[
|f(s)| \geq \int_{s}^{b} f'(t) \, dt \geq \frac{A^{1/2}}{1+\frac{\alpha}{2}} \left( (b- \theta_1)^{1+\frac{\alpha}{2}} - (s-\theta_1)^{1+\frac{\alpha}{2}}  \right) \geq \frac{A^{1/2}}{1+\frac{\alpha}{2}} \left( b-s  \right)^{1+\frac{\alpha}{2}}
\]
which implies (\ref{int1}) with the constant $C >0$ required since $\alpha < 2\gamma - 2$.
\\
If ${\bf c})$ holds, we have for $a \leq s \leq s_0$
\[
|f(s)| \geq \int_{s}^{s_0} f'(t) \, dt \geq \frac{A}{1+\frac{\alpha}{2}} \left( (s_0 - \theta_1)^{1+\frac{\alpha}{2}} - (s-\theta_1)^{1+\frac{\alpha}{2}}  \right) \geq \frac{A}{1+\frac{\alpha}{2}} \left( s_0 - s \right)^{1+\frac{\alpha}{2}}
\]
and thus $\int_{a}^{s_0} |f(s)|^{-\frac{1}{\gamma}} \, ds \leq C$, where $C$ is the constant required. Similarly, one can estimate 
$\int_{s_0}^{b} |f(s)|^{-\frac{1}{\gamma}} \, ds$. So (\ref{int1}) holds in the case ${\bf c})$. The case $(ii)$ reduces to $(i)$ by considering there $-f$ instead of $f$. Finally, the proof of that (\ref{fprime2}) $\implies$ (\ref{int1}) is similar.
\end{proof}

\begin{lemma} \label{ineqs2}
Let $f \in C^{2}([\theta_1, \theta_2])$, $\gamma > 1$ and let $I$ be a closed interval such that $I \subseteq [\theta_1, \theta_2]$. If one of the following two conditions holds
\begin{equation} \label{fprime3}
|f'(s) f''(s)| \geq A (s - \theta_1)^{\alpha} \,\,\,\, \text{for all} \,\,\, s \in I,
\end{equation}
\begin{equation} \label{fprime4}
|f'(s) f''(s)| \geq A (\theta_2 - s)^{\alpha} \,\,\,\, \text{for all} \,\,\, s \in I,
\end{equation}
for positive constants $A$ and $\alpha$ with $\alpha < 2\gamma - 3$, then ({\ref{int1}}) holds for some $C > 0$ depending only on $A$,
$\alpha$, $\theta_1$, $\theta_2$ and $\gamma$.
\end{lemma}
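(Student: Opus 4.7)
The plan is to reduce Lemma \ref{ineqs2} directly to Lemma \ref{ineqs1} by integrating the hypothesis on $|f'f''|$ once to produce a lower bound on $(f')^2$ of exactly the form required by condition (\ref{fprime1}) or (\ref{fprime2}) there.

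I would set $h(s) = (f'(s))^2$, so that $h'(s) = 2 f'(s) f''(s)$ and assumption (\ref{fprime3}) reads $|h'(s)| \geq 2A(s-\theta_1)^\alpha$ on $I$. Since $h'$ is continuous and strictly nonzero for $s > \theta_1$, it must have constant sign on $I$. In the case $h' \geq 0$, writing $I = [a,b]$, integration from $a$ gives
$$ (f'(s))^2 \geq \frac{2A}{\alpha+1}\bigl[(s-\theta_1)^{\alpha+1} - (a-\theta_1)^{\alpha+1}\bigr], \qquad s \in I. $$
I would then invoke the elementary inequality $u^p - v^p \geq (u-v)^p$ (valid for $u \geq v \geq 0$ and $p \geq 1$, applied with $p = \alpha+1 \geq 1$ since $\alpha > 0$) to upgrade this to
$$ (f'(s))^2 \geq \frac{2A}{\alpha+1}(s-a)^{\alpha+1}, \qquad s \in I. $$
This matches hypothesis (\ref{fprime1}) of Lemma \ref{ineqs1} with the pair $(\theta_1, \alpha)$ replaced by $(a, \alpha+1)$; the numerical condition $\alpha + 1 < 2\gamma - 2$ is precisely the standing assumption $\alpha < 2\gamma - 3$, so Lemma \ref{ineqs1} delivers the desired estimate.

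In the remaining case $h' \leq 0$, integrating from $b$ backward produces $(f'(s))^2 \geq \frac{2A}{\alpha+1}(b-s)^{\alpha+1}$, which fits hypothesis (\ref{fprime2}) of Lemma \ref{ineqs1}. The implication (\ref{fprime4})$\Rightarrow$(\ref{int1}) is entirely parallel, with the integration anchored at $\theta_2$ in place of $\theta_1$.

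The only genuine obstacle is the sign-constancy of $h' = 2f'f''$: without it, the unsigned integration of $|h'|$ cannot be converted into a lower bound on $h$ itself. This is resolved by noting that $h'$ is continuous on $I$ and cannot vanish on $I \cap (\theta_1, \theta_2]$, where $|h'(s)| \geq 2A(s-\theta_1)^\alpha > 0$; hence $h'$ has constant sign on $I$ by the intermediate value theorem, and the rest is routine integration plus the $p$-power inequality above.
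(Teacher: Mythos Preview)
Your proposal is correct and follows essentially the same route as the paper's proof: set $h = (f')^2$, use continuity to force a constant sign on $h' = 2f'f''$, integrate once from the appropriate endpoint of $I$ to obtain $(f')^2 \gtrsim (s-a)^{1+\alpha}$ or $(b-s)^{1+\alpha}$, and then invoke Lemma \ref{ineqs1} with the exponent shifted to $\alpha+1$. You are merely more explicit than the paper about the sign-constancy argument and about the elementary inequality $u^{p}-v^{p}\geq (u-v)^{p}$ used to pass from $(s-\theta_1)^{1+\alpha}-(a-\theta_1)^{1+\alpha}$ to $(s-a)^{1+\alpha}$.
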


\begin{proof}
Let $I=[a,b]$. If (\ref{fprime3}) holds, we have that either $(i)$ $\left((f')^{2}\right)'(t) \geq 2A(t-\theta_1)^{\alpha}$ for all 
$t \in [a,b]$ or $(ii)$ $\left((f')^{2}\right)'(t) \leq -2A(t-\theta_1)^{\alpha}$ for all $t \in [a,b]$. If $(i)$ holds, by integrating
on $[a,s]$, we obtain
\[
(f')^{2}(s) \geq (f')^{2}(a) + c_1 \left( (s-\theta_1)^{1+\alpha} - (a-\theta_1)^{1+\alpha} \right) \geq c_1(s-a)^{1+\alpha} \,\,\,\,\,\,  
\text{for all} \, s \in I,
\]
where $c_1$ is a positive constant depending only on $A$ and $\alpha$. If $(ii)$ holds, by integrating on $[s,b]$, we get similarly as 
above that $(f')^{2}(s) \geq c_2 (b-s)^{1+\alpha}$ for all $s \in I$, with $c_2=c_2(A, \alpha)$. Since $1+\alpha < 2 \gamma -2$, the 
lemma follows, in both cases $(i)$ and $(ii)$, from Lemma \ref{ineqs1}. Finally, the case when (\ref{fprime4}) holds is similar.
\end{proof}

\begin{lemma} \label{ineqs3}
Let $f \in C^{2}([\theta_1, \theta_2])$, $\gamma > 1$ and let $I$ be a closed interval such that $I \subseteq [\theta_1, \theta_2]$. If one of the following two conditions holds
\begin{equation} \label{fprime5}
f(s) f''(s) \geq A (s - \theta_1)^{\alpha} \,\,\,\, \text{for all} \,\,\, s \in I,
\end{equation}
\begin{equation} \label{fprime6}
f(s) f''(s) \geq A (\theta_2 - s)^{\alpha} \,\,\,\, \text{for all} \,\,\, s \in I,
\end{equation}
for positive constants $A$ and $\alpha$ with $\alpha < 2\gamma - 2$, then ({\ref{int1}}) holds for some $C > 0$ depending only on $A$,
$\alpha$, $\theta_1$, $\theta_2$ and $\gamma$.
\end{lemma}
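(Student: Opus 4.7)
The plan is to pass from $f$ to $g := f^{2}$ and exploit the convexity and quantitative growth of $g$ forced by (\ref{fprime5}). Once a pointwise estimate of the form $|f(s)| \geq c\,|s - s_{*}|^{(\alpha+2)/2}$ is in hand for some $s_{*} \in I$, the integrability of $|f|^{-1/\gamma}$ will be immediate from the hypothesis $\alpha < 2\gamma - 2$. The case (\ref{fprime6}) reduces to (\ref{fprime5}) by the reflection $s \mapsto \theta_{1} + \theta_{2} - s$, so I focus on (\ref{fprime5}).

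Computing $g'' = 2(f')^{2} + 2 f f''$ and using (\ref{fprime5}), I obtain $g''(s) \geq 2 f(s) f''(s) \geq 2A(s - \theta_{1})^{\alpha} \geq 0$ on $I$; in particular $g \geq 0$ is convex. Writing $I = [a,b]$, let $s_{*} \in [a,b]$ be a minimizer of $g$ on $I$. Convexity guarantees $g' \leq 0$ on $[a, s_{*}]$ and $g' \geq 0$ on $[s_{*}, b]$, with $g'(s_{*}) = 0$ when $s_{*} \in (a,b)$ (in the endpoint cases, the corresponding half-interval is trivial or the sign of $g'(s_{*})$ cooperates in the bookkeeping below). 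Crucially, $s_{*} \geq a \geq \theta_{1}$, so $(r - \theta_{1})^{\alpha} \geq (r - s_{*})^{\alpha}$ for $r \geq s_{*}$, and similarly $(r - \theta_{1})^{\alpha} \geq (r - s)^{\alpha}$ for $r \in [s, s_{*}]$ when $s \geq \theta_{1}$.

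For $s \in [s_{*}, b]$, integrating $g''(r) \geq 2A(r - s_{*})^{\alpha}$ from $s_{*}$ to $s$ and discarding the non-negative term $g'(s_{*})$ gives
\[
g'(s) \geq \tfrac{2A}{\alpha + 1}\,(s - s_{*})^{\alpha + 1};
\]
a second integration and discarding $g(s_{*}) \geq 0$ produces
\[
g(s) \geq \tfrac{2A}{(\alpha + 1)(\alpha + 2)}\,(s - s_{*})^{\alpha + 2}.
\]
The mirror argument on $[a, s_{*}]$ (now estimating $-g'(s)$ from below and using $-g'(s_{*}) \geq 0$) yields the same bound with $(s_{*} - s)^{\alpha + 2}$. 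Combining,
\[
|f(s)| = \sqrt{g(s)} \geq c\,|s - s_{*}|^{(\alpha + 2)/2} \qquad \text{for every } s \in I,
\]
where $c$ depends only on $A$ and $\alpha$. Therefore
\[
\int_{I} |f(s)|^{-1/\gamma}\, ds \leq c^{-1/\gamma} \int_{I} |s - s_{*}|^{-(\alpha + 2)/(2\gamma)}\, ds,
\]
which is finite with a bound depending only on $A, \alpha, \theta_{1}, \theta_{2}, \gamma$, since the hypothesis $\alpha < 2\gamma - 2$ is precisely $(\alpha + 2)/(2\gamma) < 1$.

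The main conceptual obstacle is that the hypothesis makes no a priori sign assumption on $f$ and only controls the product $f \cdot f''$; in principle $f$ or $f''$ could change sign, and a direct convexity argument on $f$ itself would not apply. The trick of working with $g = f^{2}$ bypasses this cleanly: the inequality $g'' \geq 2 f f''$ uses only $f f'' \geq 0$, and none of the subsequent integrations ever refer to the sign of $f$ itself.
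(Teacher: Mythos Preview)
Your proof is correct and reaches the same endpoint as the paper's --- a pointwise bound $f^{2}(s) \geq c\,|s-s_{*}|^{\alpha+2}$ about a minimizer $s_{*}$, followed by the obvious integrability argument --- but the route is genuinely different. The paper first observes that $f f'' > 0$ on the interior of $I$ forces $f$ and $f''$ to have a common fixed sign, splits into the two cases, locates the minimizer of $f$, and then uses the integration-by-parts identity $f(t)f'(t)-f(a)f'(a)-\int_{a}^{t}(f')^{2}=\int_{a}^{t}ff''$ together with sign information on $f(a)f'(a)$ to bound $\tfrac{1}{2}(f^{2})'$, before integrating once more. Your trick of passing directly to $g=f^{2}$ and using $g''=2(f')^{2}+2ff''\geq 2ff''$ collapses all of this: the sign case-split, the integration by parts, and the endpoint bookkeeping for $f(a)f'(a)$ disappear, because convexity of $g$ and two straight integrations of $g''$ suffice. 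The paper's approach has the mild advantage of making the sign structure of $f$ and $f''$ explicit (which is conceptually tied to the ellipticity discussion elsewhere), but your argument is shorter and more robust, and the dependence of the final constant on $A,\alpha,\theta_{1},\theta_{2},\gamma$ is just as transparent.
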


\begin{proof}
Let $I=[a,b]$. We give the proof only for the case when (\ref{fprime5}) holds, since the proof when (\ref{fprime6}) holds is similar. We assume that (\ref{fprime5}) occurs, in this case either 

$(i)$ $f \geq 0$ and $f'' \geq 0$ on $I$,

or

$(ii)$ $f \leq 0$ and $f'' \leq 0$ on $I$.
\\
If $(i)$ holds, let $s_1$ be the point where the minimum of $f$ on $I$ is achieved. If $s_1 = a$, then $f(a) \geq 0$ and $f'(a) \geq 0$. 
From (\ref{fprime5}), to integrate by parts, we get
\[
\frac{1}{2} (f^{2})'(t) \geq f(t) f'(t) - f(a) f'(a) - \int_{a}^{t} (f')^{2}(r) \, dr = \int_{a}^{t} f(r) f''(r) \, dr 
\]
\[
\geq c_1 \left( (t-\theta_1)^{1+\alpha} -
(a-\theta_1)^{1+\alpha} \right) \geq c_1 (t-a)^{1+\alpha},
\]
for a positive constant $c_1=c_1(A, \alpha)$ and for all $t \in I$. A new integration on $[a,s]$ gives, for some positive constant 
$c_2 = c_2(A, \alpha)$,
\begin{equation} \label{f2}
f^{2}(s) \geq c_2 (s-a)^{2+\alpha}, \,\,\,\, \text{for all} \, s \in I.
\end{equation}
Since $\alpha < 2\gamma - 2$, (\ref{f2}) gives (\ref{int1}) with a constant $C$ as required. Now, if $s_1 = b$ we have that $f(b) > 0$ and
$f'(r)\leq f'(b) \leq 0$ for every $r \in I$. Using (\ref{fprime5}), to integrate by parts on $[t,b]$, we get
$-(f^{2})'(t) \geq c_3 (b-t)^{1+\alpha}$ for a positive constant $c_3 = c_3(A,\alpha)$ and all $t \in I$. Then, the proof in this case follows as above, but integrating on $[s,b]$. Now, we consider the case $a < s_1 < b$. This case follows to apply the two cases above on the intervals $[s_1, b]$ and $[a, s_1]$ respectively.

Finally, the case $(ii)$ reduces to $(i)$ by considering there $-f$ instead of $f$.
\end{proof}

\begin{lemma} \label{ineqs4}
Let $f \in C^{2}([\theta_1, \theta_2])$, $\gamma > 1$ and let $I$ be a closed interval such that $I \subseteq [\theta_1, \theta_2]$. If one of the following two conditions holds
\begin{equation} \label{fprime7}
f(s) f''(s) \leq -A (s - \theta_1)^{\alpha} \,\,\,\, \text{for all} \,\,\, s \in I,
\end{equation}
\begin{equation} \label{fprime8}
f(s) f''(s) \leq -A (\theta_2 - s)^{\alpha} \,\,\,\, \text{for all} \,\,\, s \in I,
\end{equation}
for positive constants $A$ and $\alpha$ with $\alpha < 2\gamma - 2$, then ({\ref{int1}}) holds for some $C > 0$ depending only on $A$,
$\alpha$, $\theta_1$, $\theta_2$ and $\gamma$.
\end{lemma}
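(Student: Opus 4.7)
As in the preceding lemmas, I would treat only condition~(\ref{fprime7}); condition~(\ref{fprime8}) is handled analogously. The continuity of $ff''$ and the strict negativity of its upper bound on $I\cap(\theta_1,\theta_2]$ force either (i) $f\geq 0$ and $f''\leq 0$, or (ii) $f\leq 0$ and $f''\geq 0$ on $I$; case (ii) reduces to (i) on replacing $f$ with $-f$, so I will assume $f$ is nonnegative and concave on $I=[a,b]$. Let $s^*\in[a,b]$ be a point where $f$ attains its maximum $M$; then $f'\geq 0$ on $[a,s^*]$ and $f'\leq 0$ on $[s^*,b]$, and since $((f')^2)'=2f'f''$, the function $(f')^2$ is monotonic on each half-interval, attaining its smallest value at $s^*$. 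Note that the hypothesis precludes $M=0$.

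The core step is to derive, on each half, a pointwise lower bound on $f$. Working on $[a,s^*]$, integration by parts using $f'(s^*)=0$ (together with the elementary inequality $(s^*-\theta_1)^{\alpha+1}-(s-\theta_1)^{\alpha+1}\geq(s^*-s)^{\alpha+1}$ for $\alpha\geq 0$) gives
\[
f(s)f'(s)+\int_s^{s^*}(f'(r))^2\,dr\;\geq\;\frac{A}{\alpha+1}(s^*-s)^{\alpha+1}.
\]
Because $(f')^2$ is nonincreasing on $[a,s^*]$, one has $\int_s^{s^*}(f'(r))^2\,dr\leq(s^*-s)(f'(s))^2$, which turns the inequality into a quadratic in $f'(s)$; solving and using $f'(s)\geq 0$ yields the dichotomy, at each $s\in[a,s^*]$:
\[
f(s)\geq c_1(s^*-s)^{(\alpha+2)/2}\qquad\text{or}\qquad f'(s)\geq c_2(s^*-s)^{\alpha/2},
\]
for explicit constants $c_1,c_2>0$ depending only on $A$ and $\alpha$.

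Since $f$ is nondecreasing on $[a,s^*]$ while $s\mapsto c_1(s^*-s)^{(\alpha+2)/2}$ is decreasing, once $f$ exceeds this threshold it stays above; letting $T$ denote the transition point, the direct bound $f\geq c_1(s^*-s)^{(\alpha+2)/2}$ holds on $[T,s^*]$, while on $[a,T]$ integrating the derivative bound gives $f(r)\geq c_2(s^*-r)^{\alpha/2}(r-a)$. Matching these bounds at $r=T$ will force $s^*-T\geq c_5(s^*-a)$ for a constant $c_5=c_5(\alpha)>0$, and hence the cleaner estimate $f(r)\geq c_6(s^*-a)^{\alpha/2}(r-a)$ on $[a,T]$. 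The hypothesis $\alpha<2\gamma-2$ makes both $(\alpha+2)/(2\gamma)<1$ and $1/\gamma<1$, so both
\[
\int_T^{s^*}(s^*-s)^{-(\alpha+2)/(2\gamma)}\,ds\qquad\text{and}\qquad \int_a^T(r-a)^{-1/\gamma}\,dr
\]
converge, yielding $\int_a^{s^*}f^{-1/\gamma}\leq C$ with $C$ depending only on $A,\alpha,\theta_1,\theta_2,\gamma$. A symmetric argument on $[s^*,b]$ (using that $(f')^2$ is nondecreasing there) completes the proof; the endpoint cases $s^*\in\{a,b\}$ are handled similarly, with the boundary term from the integration by parts having the favorable sign.

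The principal obstacle, compared with Lemma~\ref{ineqs3}, is the unfavorable sign of the boundary terms in the integration by parts: in Lemma~\ref{ineqs3} one obtains a direct lower bound on $(f^2)'$ that integrates cleanly to a pointwise bound on $f^2$, whereas here one only controls $ff'$ together with $\int(f')^2$, forcing the quadratic analysis and the case split around the transition point $T$. Verifying that $c_5>0$ via the matching computation is the technical crux.
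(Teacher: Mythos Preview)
Your argument is correct and takes a genuinely different route from the paper's. The paper proceeds by a comparison principle: it introduces the explicit test function $\widetilde u(s)=\sin^{1+\alpha/2}\!\bigl(\tfrac{(s-a)\pi}{b-a}\bigr)$, checks that $-\widetilde u''=\widetilde\Phi/\widetilde u$ for an explicit $\widetilde\Phi$ satisfying $\widetilde\Phi\le\beta\Phi$ with $\Phi(s)=A(b-a)^{\alpha}\pi^{-\alpha}\sin^{\alpha}\!\bigl(\tfrac{(s-a)\pi}{b-a}\bigr)\le -f(s)f''(s)$, and then shows $\widetilde u\le\beta^{1/2}f$ on $(a,b)$ by observing that $\beta^{1/2}f-\widetilde u$ would be strictly concave on any connected component of its negativity set while vanishing at the endpoints, a contradiction. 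This yields the pointwise lower bound $f\ge\beta^{-1/2}\widetilde u$ in one stroke, after which $\int_I f^{-1/\gamma}$ is controlled by the explicitly finite integral $\int_0^\pi(\sin\theta)^{-(2+\alpha)/(2\gamma)}\,d\theta$. Your approach, by contrast, is entirely hands-on: integration by parts from the maximum point, the monotonicity of $(f')^2$ on each half, the quadratic dichotomy, and the transition-point matching. The paper's method is slicker and avoids all case splits once the right comparison function is in hand; yours is more elementary, makes no appeal to sub/super-solution ideas, and arguably better exposes how the exponent condition $\alpha<2\gamma-2$ enters (through both $(\alpha+2)/(2\gamma)<1$ and the positivity of $1-\tfrac{1}{\gamma}-\tfrac{\alpha}{2\gamma}$ that keeps the $(s^*-a)^{-\alpha/(2\gamma)}$ factor under control). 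Both deliver constants with the stated dependence.
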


\begin{proof}
The proof of the case when (\ref{fprime8}) holds is analogous to that of when (\ref{fprime7}) occurs, so we will prove only that 
(\ref{fprime7}) $\implies$ (\ref{int1}).
Let $I=[a,b]$. From (\ref{fprime7}) we have either $(i)$ $f(s) > 0$ and $f''(s) < 0$ for all $s \in I \setminus \{ \theta_1 \}$ or
$(i)$ $f(s) < 0$ and $f''(s) > 0$ for all $s \in I \setminus \{ \theta_1 \}$. Consider the case $(i)$. Let $u$ and $\Phi$ be the functions defined on $[a,b]$ by
\[
u(s) = \sin \left( \frac{(s-a)\pi}{b-a} \right),
\]
and
\[
\Phi(s) = A (b-a)^{\alpha} \pi^{-\alpha} u^{\alpha}(s).
\]
From (\ref{fprime7}) we have that $f(a) \geq 0$, $f(b) > 0$ and $-f''(s) \geq \Phi(s)/f(s)$ for all $s \in (a,b]$. Let $\widetilde{u}$ and 
$\widetilde{\Phi}$ be the functions defined on $[a,b]$ by
\[
\widetilde{u}(s) = u^{1+\frac{\alpha}{2}}(s),
\]
and
\[
\widetilde{\Phi}(s)= \frac{\pi^{2}(1+\frac{\alpha}{2})}{(b-a)^{2}} \left[  u^{2+\alpha}(s) - \frac{\alpha}{2} 
\cos^{2}\left( \frac{(s-a)\pi}{b-a} \right) u^{\alpha}(s) \right].
\]
A computation shows that $\widetilde{u}$ satisfies $-\widetilde{u}'' = \widetilde{\Phi}/\widetilde{u}$ on $(a,b)$ and 
$\widetilde{u}(a) = \widetilde{u}(b) = 0$. Also, $\widetilde{\Phi} \leq \beta \Phi$ on $(a,b)$, with 
$\beta = A^{-1} (b-a)^{-(2+\alpha)} \pi^{2+\alpha} (1+\frac{\alpha}{2})^{2}$. We claim that
\begin{equation} \label{uf1}
\widetilde{u} \leq \beta^{1/2} f \,\,\,\,\,\,\,\, \text{on} \,\,\, (a,b).
\end{equation}
Indeed, we have $-\widetilde{u}'' \leq \beta \Phi/\widetilde{u}$ and $-(\beta^{1/2}f)'' \geq \beta \Phi (\beta^{1/2}f)^{-1}$ on $(a,b)$. Then
\begin{equation} \label{uf2}
-(\beta^{1/2}f-\widetilde{u})'' \geq -\beta \Phi \frac{\beta^{1/2}f-\widetilde{u}}{\beta^{1/2} f \widetilde{u}} \,\,\,\,\,\,\,\, on \,\,\, 
(a,b).
\end{equation}
Let $I^{\ast} = \{ s \in (a,b) : \beta^{1/2}f < \widetilde{u} \}$ and suppose that $I^{\ast} \neq \emptyset$. Let $J$ be a connected 
component of $I^{\ast}$. Since $\beta^{1/2}f-\widetilde{u}$ is continuous on $I^{\ast}$ and $\beta^{1/2}f-\widetilde{u}$ is nonnegative 
at $a$ and at $b$, it follows that $\beta^{1/2}f-\widetilde{u}$ vanishes at $\partial J$, but $\Phi > 0$ on $(a,b)$, so (\ref{uf2}) 
implies that $\beta^{1/2}f-\widetilde{u}$ is strictly concave on $J$. Being $\beta^{1/2}f-\widetilde{u}$ negative on $J$, we get a contradiction. Thus (\ref{uf1}) holds. Then
\[
\int_{I} |f(s)|^{-\frac{1}{\gamma}} \, ds \leq \beta^{\frac{1}{2\gamma}} \int_{I} [\widetilde{u}(s)]^{-\frac{1}{\gamma}} \, ds
= \beta^{\frac{1}{2\gamma}} (b-a)\pi^{-1} \int_{0}^{\pi} (\sin(\theta))^{-\frac{2+\alpha}{2\gamma}} \, d\theta < \infty
\]
since $\alpha < 2\gamma -2$. Finally, the case $(ii)$ follows from the previous one applied to $-f$.
\end{proof}

We recall the definition of our singular measure $\mu$ in $\mathbb{R}^{4}$ which is supported on the graph of a real analytic function 
$\varphi : V^{a,b} \to \mathbb{R}^{2}$ satisfying H1 - H3. We assume, in addition to H1 - H3, that

\

H4) $\max \{ n_1, n_2 \} < \displaystyle{\frac{2m}{\alpha_1 + \alpha_2}} - 3$,

\
\\
where $n_1$ and $n_2$ are the two positive integers appearing in Lemma \ref{eigen functions}, (\textit{iii}). The number 
$\max \{ n_1, n_2 \}$ is a kind of measure of the degeneracy of ellipticity along the curve $\{ t \bullet (1, \sigma) : t >0 \}$ 
(see hypothesis H3). Now, we consider the set $V_{1}^{c,d} \subset V^{a,b}$ given by (\ref{set Vcd1}), where $\sigma \in [c,d]$. Then, for every Borel set $E$ of $\mathbb{R}^{4}$, we define the singular measure $\mu$ by
\begin{equation} \label{mu2}
\mu(E) = \int_{V_{1}^{c,d}} \chi_{E}(x, \varphi(x)) \, dx.
\end{equation}

In the following theorem, assuming H1 - H4, we give a decay estimate for the Fourier transform of $\mu$.

\begin{theorem} \label{muhat estimate}
Let $\mu$ be the singular measure given by (\ref{mu2}). Then there exist a positive 
constant $C$ such that
\begin{equation} \label{muhat2}
\left|  \widehat{\mu}(\xi', \xi'') \right| \leq C \, |\xi''|^{-\frac{\alpha_1+\alpha_2}{m}}, \,\,\,\,\,\, \text{for all} \,\,
\xi' \in \mathbb{R}^{2} \,\, \text{and all} \,\, \xi'' \in \mathbb{R}^{2} \setminus \{ {\bf 0} \}.
\end{equation}  
\end{theorem}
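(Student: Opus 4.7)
The plan is to start from formula (\ref{muhat}) for $\widehat{\mu}$ and apply Lemma \ref{int osc} to the inner integral in $t$, with $A_1 = \xi_1$, $A_2 = \xi_2 s$ and $A_3 = \langle \varphi(1,s), \xi''\rangle$. Writing $\xi'' = |\xi''|\zeta$ with $\zeta \in S^1$ and $G(\zeta,s) = \langle \varphi(1,s), \zeta\rangle$, this yields
\[
|\widehat{\mu}(\xi', \xi'')| \leq C\, |\xi''|^{-(\alpha_1+\alpha_2)/m} \int_{c}^{d} |G(\zeta, s)|^{-(\alpha_1+\alpha_2)/m} \, ds,
\]
so the theorem reduces to proving the uniform bound $\sup_{\zeta \in S^1} \int_c^d |G(\zeta,s)|^{-1/\gamma}\, ds < \infty$, with $\gamma := m/(\alpha_1+\alpha_2)$. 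Hypothesis H4 is exactly the statement $\max\{n_1,n_2\} < 2\gamma - 3$, which is the exponent condition required by Lemmas \ref{ineqs1}--\ref{ineqs4}.

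Next, for each $\zeta\in S^1$, I will split $[c,d]$ into the left neighborhood $(\sigma-\delta,\sigma)$, the right neighborhood $(\sigma,\sigma+\delta)$, and the complementary compact piece. On the left neighborhood, Lemma \ref{Gs} and Remark \ref{lower bound} (i) give $|f_1-f_2+f_3|(\zeta,s)\geq \alpha_1^{-2}D\,|s-\sigma|^{n_1}$, so the triangle inequality forces $(\sigma-\delta,\sigma)=\bigcup_{j=1}^{5} I_j^{\delta,\zeta,C}$ for a suitable constant $C$ (cf.\ Remark \ref{CetaD}). On $I_1^{\delta,\zeta,C}$ and $I_2^{\delta,\zeta,C}$ the bound on $f_1=m(m-\alpha_1)G G_{ss}$ puts us in the setting of Lemmas \ref{ineqs3} and \ref{ineqs4} respectively (with $\alpha=n_1<2\gamma-2$); on $I_3^{\delta,\zeta,C}$ the bound on $f_2=(m-\alpha_2)^2 G_s^2$ gives the hypothesis of Lemma \ref{ineqs1}; and on $I_4^{\delta,\zeta,C}$, $I_5^{\delta,\zeta,C}$ the bound on $f_3$ (absorbing the bounded factor $s$) yields the hypothesis of Lemma \ref{ineqs2} with $\alpha=n_1<2\gamma-3$. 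Lemma \ref{CeW} (i) bounds the number of connected components of each $I_j^{\delta,\zeta,C}$ by $n_1$, so summing the resulting estimates controls $\int_{\sigma-\delta}^{\sigma}|G(\zeta,s)|^{-1/\gamma}\,ds$ uniformly for $\zeta$ in an $S^1$-neighborhood $W_\eta$ of a fixed $\eta$. The right neighborhood is handled symmetrically using the sets $J_j^{\delta,\zeta,C}$ and exponent $n_2$.

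For the compact piece $[c,d]\setminus(\sigma-\delta,\sigma+\delta)$, Remark \ref{lower bound} (iii) gives the uniform lower bound $|f_1-f_2+f_3|\geq \tilde{D}$, and Lemma \ref{CeW} (ii) tells us that each $K_j^{\tau,\delta,\zeta,C}$ is either empty or the entire small interval $(\tau-\delta,\tau+\delta)\cap[c,d]$; the same four lemmas now apply with $\alpha=0$ to produce a finite bound on each such piece. To convert the $\zeta$-dependent neighborhoods into a single uniform estimate, I invoke compactness of $S^1$ (finitely many $W_\eta$ and $W_{\eta,\tau}$ cover $S^1$) and compactness of $[c,d]\setminus(\sigma-\delta,\sigma+\delta)$ (finitely many $(\tau-\delta_{\eta,\tau},\tau+\delta_{\eta,\tau})$ suffice), collapsing the estimate into a constant independent of $\zeta\in S^1$.

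The main obstacle is keeping all constants uniform in $\zeta$: each invocation of Lemmas \ref{ineqs1}--\ref{ineqs4} produces a constant depending on the explicit lower bound for one of $|GG_{ss}|$, $|G_s|^2$ or $|G_s G_{ss}|$, which a priori varies with $\zeta$. The role of Lemma \ref{CeW} together with Remark \ref{CetaD} is precisely to yield $\zeta$-uniform lower bounds on small open patches of $S^1$ and on small intervals of $[c,d]$; after this the compactness argument turns pointwise-in-$\zeta$ estimates into a global uniform constant, and the theorem follows.
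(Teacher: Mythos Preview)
Your proposal is correct and follows essentially the same route as the paper's proof: reduce via Lemma \ref{int osc} to a uniform bound on $\int_c^d |G(\zeta,s)|^{-1/\gamma}\,ds$, cover $(\sigma-\delta,\sigma)$ and $(\sigma,\sigma+\delta)$ by the sets $I_j$, $J_j$ using Remark \ref{lower bound}, apply Lemmas \ref{ineqs1}--\ref{ineqs4} on each connected component with the component count controlled by Lemma \ref{CeW}, and finish by compactness of $S^1$ and of $[c,d]\setminus(\sigma-\epsilon,\sigma+\epsilon)$. The matching of which of Lemmas \ref{ineqs1}--\ref{ineqs4} to use on each $I_j$ and the role of H4 as the exponent condition $n_i<2\gamma-3$ needed for Lemma \ref{ineqs2} are exactly as in the paper.
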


\begin{proof}
We consider the case $\sigma \in (c,d)$; if $c = \sigma$ we work with the second inclusion in (\ref{inc}) below, now if $d = \sigma$, then we work with the first. From (\ref{muhat}) and 
Lemma \ref{int osc}, we have for universal constant $C > 0$ that
\[
\left|  \widehat{\mu}(\xi', \xi'') \right| \leq C \, |\xi''|^{-\frac{\alpha_1+\alpha_2}{m}} 
\int_{c}^{d} |G(\zeta, s)|^{-\frac{\alpha_1+\alpha_2}{m}} \, ds, \,\,\,\,\,\, \text{for all} \,\,
(\xi', \xi'') \in \mathbb{R}^{2} \times \mathbb{R}^{2} \setminus \{ {\bf 0} \},
\]
where $\zeta = \frac{\xi''}{|\xi''|}$. Given $D > 0$ as in Lemma \ref{eigen functions} (\textit{iii}), from 
Remark \ref{lower bound}, $(i)$ and $(ii)$, for $\delta > 0$ and small enough we have
\begin{equation} \label{inc}
(\sigma-\delta, \sigma) \subseteq \bigcup_{j=1}^{5} I_{j}^{\delta, \, \zeta, \, D} \,\,\,\,\, \text{and} \,\,\,\,\,
(\sigma, \sigma+\delta) \subseteq \bigcup_{j=1}^{5} J_{j}^{\delta, \, \zeta, \, D} \,\,\,\, \text{for all} \,\,\,\, \zeta \in S^{1},
\end{equation}
where the sets $I_{j}^{\delta, \, \zeta, \, D}$ and $J_{j}^{\delta, \, \zeta, \, D}$ are defined by (\ref{Ijotas}). For $\eta \in S^{1}$ 
fixed, let $\delta_{\eta}$, $C_{\eta}$,  and $W_{\eta}$ be as in Lemma \ref{CeW}, $(i)$. By Remark \ref{CetaD}, we take $C_{\eta} \leq D$. From the compactness of $S^{1}$ there exists a finite set of points $\{ \eta_k \}_{k=1}^{N} \subset S^{1}$ such that $S^{1} = \bigcup_{k=1}^{N} W_{\eta_k}$. Now, we pick $\epsilon$ such that 
$0 < \epsilon < \delta$ and $0 < \epsilon < \min \{ \delta_{\eta_{k}} : 1 \leq k \leq N \}$, so
$(\sigma - \epsilon, \sigma) \cup (\sigma, \sigma + \epsilon) \subset \bigcup_{j=1}^{5} \left( I_{j}^{\epsilon, \, \zeta, \, D} \cup
J_{j}^{\epsilon, \, \zeta, \, D}  \right)$ for all $\zeta \in S^{1}$.

Given $\zeta \in S^{1}$, we have that $\zeta \in W_{\eta_k}$ for some $k=k(\zeta)$. Since $\delta_{\eta_{k(\zeta)}} \geq \epsilon$ and
$C_{\eta_{k(\zeta)}} \leq D$, it follows that $I_{j}^{\epsilon, \, \zeta, \, D} \subset I_{j}^{\delta_{\eta_{k(\zeta)}}, \, \zeta, \, 
C_{\eta_{k(\zeta)}}}$ and $J_{j}^{\epsilon, \, \zeta, \, D} \subset J_{j}^{\delta_{\eta_{k(\zeta)}}, \, \zeta, \, C_{\eta_{k(\zeta)}}}$ for every $j=1,2, ...,5$. Then
\[
(\sigma - \epsilon, \sigma) \cup (\sigma, \sigma + \epsilon) \subset \bigcup_{j=1}^{5} \left( I_{j}^{\delta_{\eta_{k(\zeta)}}, \, \zeta, \, C_{\eta_{k(\zeta)}}} \cup J_{j}^{\delta_{\eta_{k(\zeta)}}, \, \zeta, \, C_{\eta_{k(\zeta)}}}  \right).
\]
Hence 
\[
\int_{\sigma-\epsilon}^{\sigma-\epsilon} |G(\zeta,s)|^{-\frac{\alpha_1+\alpha_2}{m}} \, ds 
\]
\[
\leq
\sum_{j=1}^{5} \left[ \int_{I_{j}^{\delta_{\eta_{k(\zeta)}}, \, \zeta, \, C_{\eta_{k(\zeta)}}}} 
|G(\zeta,s)|^{-\frac{\alpha_1+\alpha_2}{m}} \, ds + \int_{J_{j}^{\delta_{\eta_{k(\zeta)}}, \, \zeta, \, C_{\eta_{k(\zeta)}}}} 
|G(\zeta,s)|^{-\frac{\alpha_1+\alpha_2}{m}} \, ds\right].
\]
By Lemma \ref{CeW}, (i), the sets $I_{j}^{\delta_{{\eta_{k(\zeta)}}}, \, \zeta, \, C_{\eta_{k(\zeta)}}}$ and 
$J_{j}^{\delta_{{\eta_{k(\zeta)}}}, \, \zeta, \, C_{\eta_{k(\zeta)}}}$ have at most $n_1$ and $n_2$ connected components respectively, so
we can estimate, uniformly on $\zeta$, the integrals on the right-side hand of this inequality by applying Lemmas \ref{ineqs1}, \ref{ineqs2}, 
\ref{ineqs3} and \ref{ineqs4} to the function $f(s) = G(\zeta,s)$ with $\gamma = \frac{m}{\alpha_1+\alpha_2}$ and $\alpha = n_1$ or 
$\alpha = n_2$ according to the case and taking into account the hypothesis H4.

Let $\tilde{D} > 0$ be as in Lemma \ref{eigen functions}, (\textit{iv}), and let $F=[c,d] \setminus (\sigma - \epsilon, \sigma + \epsilon)$, where $\epsilon$ is as above. It is clear that $F$ is compact. For $\tau \in [c,d] \setminus (\sigma - \frac{\epsilon}{2}, \sigma + \frac{\epsilon}{2})$
and $\eta \in S^{1}$, let $\delta_{\eta, \tau}$, $C_{\eta, \tau} \leq \tilde{D}$ and $W_{\eta, \tau}$ be as in 
Lemma \ref{CeW}, $(ii)$. From the compactness of $S^{1}$ there exists a finite set of points $\{ \eta_{k} \}_{k=1}^{M} \subset S^{1}$
such that $S^{1} = \bigcup_{k=1}^{M} W_{\eta_{k}, \tau}$. Now, from Remark \ref{lower bound} $(iii)$, for $\delta > 0$ and small enough we have that
$$(\tau -\delta, \tau + \delta) \cap [c,d] \subset \bigcup_{j=1}^{5} K_{j}^{\tau, \delta, \zeta, \tilde{D}}, \,\, \text{for all} \,\,
\zeta \in S^{1}.$$
Let $\delta_{\tau} = \min \{ \delta_{\eta_k, \tau} : 1 \leq k \leq M\}$.
We take $\widetilde{\epsilon} = \min \{ \delta, \delta_{\tau} \}$. Now, given $\zeta \in S^{1}$ we have
that $\zeta \in W_{\eta_k}$ for some $k=k(\zeta)$. Since $\delta_{\eta_{k(\zeta)}, \tau}=\delta_{\eta_{k}, \tau} \geq 
\widetilde{\epsilon}$ and $C_{\eta_{k(\zeta)}, \tau}=C_{\eta_{k}, \tau} \leq \tilde{D}$,  it follows that
$I_{\tau} := (\tau - \widetilde{\epsilon}, \tau + \widetilde{\epsilon}) \cap [c,d] \subset \bigcup_{j=1}^{5} 
K_{j}^{\tau, \widetilde{\epsilon}, \zeta, \tilde{D}} \subset \bigcup_{j=1}^{5} 
K_{j}^{\tau, \delta_{\eta_{k}, \tau}, \zeta, C_{\eta_{k}, \tau}}$. Thus
\[
\int_{I_{\tau}} |G(\zeta,s)|^{-\frac{\alpha_1+\alpha_2}{m}} \, ds 
\leq
\sum_{j=1}^{5} \int_{K_{j}^{\tau, \delta_{\eta_{k}, \tau}, \, \zeta, \, C_{\eta_{k}, \tau}}} 
|G(\zeta,s)|^{-\frac{\alpha_1+\alpha_2}{m}} \, ds.
\]
By Lemma \ref{CeW}, $(ii)$, each $K_{j}^{\tau, \delta_{\eta_{k}, \tau}, \, \zeta, \, C_{\eta_{k}, \tau}}$ is either empty or connected
and so the integrals on the right-side hand of this last inequality can be estimated, uniformly on $\zeta$, by using Lemmas \ref{ineqs1}, \ref{ineqs2}, \ref{ineqs3}, \ref{ineqs4} and the hypothesis H4. Since $F$ is covered by a finite number of these intervals 
$(\tau-\widetilde{\epsilon}, \tau+\widetilde{\epsilon})$, we obtain an estimate, uniformly on $\zeta$, for the integral
$\int_{F} |G(\zeta,s)|^{-\frac{\alpha_1+\alpha_2}{m}} \, ds$. This concludes the proof of the theorem.
\end{proof}

\section{A restriction theorem for the Fourier transform and $L^{p}$-improving properties of $\mu$}

Let $\Sigma = \left\{ (x, \varphi(x)) : x \in V_{1}^{c,d} \right\}$, with $\sigma \in [c,d]$ (see hyp. H3). For 
$f \in L^{1}(\mathbb{R}^{4})$, let $\mathcal{R}f = \widehat{f} \, |_{\Sigma}$ where $\widehat{f}$ denotes the usual Fourier transform. The following lemma gives a necessary condition for the $L^{p}(\mathbb{R}^{4})-L^{q}(\Sigma)$ boundedness of the restriction operator 
$\mathcal{R}$ and it is an adaptation, to our setting, of the well known Knapp's homogeneity argument.

\begin{lemma} \label{Knapp}
If there exists a positive constant $C$ such that
\begin{equation} \label{R1}
\| \mathcal{R}f \|_{L^{q}(\Sigma, d\mu)} \leq C \| f \|_{L^{p}(\mathbb{R}^{4})}, \,\,\,\,  f \in \mathcal{S}(\mathbb{R}^{4}).
\end{equation}
then $\frac{1}{q} \geq \frac{(\alpha_1 + \alpha_2 + 2m)}{(\alpha_1 + \alpha_2)} \frac{1}{p'}$.
\end{lemma}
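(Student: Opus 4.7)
The plan is to carry out a Knapp-type homogeneity argument tailored to the nonisotropic dilations $t \bullet x = (t^{\alpha_1}x_1, t^{\alpha_2}x_2)$ and to the homogeneity $\varphi(t\bullet x) = t^{m}\varphi(x)$ provided by H2. For each $\lambda \in (0,1)$ I would exhibit a test function $f_{\lambda} \in \mathcal{S}(\mathbb{R}^{4})$ whose Fourier transform is essentially concentrated on a Knapp box that contains a carefully chosen anisotropic subpiece of $\Sigma$; evaluating both sides of (\ref{R1}) at $f_{\lambda}$ and letting $\lambda \to 0^{+}$ then forces the asserted relation between $p$ and $q$.

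The relevant subpiece is $\Sigma_{\lambda} := \{(x,\varphi(x)) : x\in U_{\lambda}\}$, where
\[
U_{\lambda} \;=\; \{\,t\bullet(1,s) \,:\, 0<t<\lambda,\ c<s<d\,\} \;\subset\; V_{1}^{c,d}.
\]
The change of variables $x_{1}=t^{\alpha_{1}}$, $x_{2}=t^{\alpha_{2}}s$ used to derive (\ref{muhat}) has Jacobian $\alpha_{1}\,t^{\alpha_{1}+\alpha_{2}-1}$, and therefore
\[
\mu(\Sigma_{\lambda}) \;=\; |U_{\lambda}| \;=\; \frac{\alpha_{1}(d-c)}{\alpha_{1}+\alpha_{2}}\,\lambda^{\alpha_{1}+\alpha_{2}}.
\]
On the other hand, by H2 together with the continuity (boundedness) of $\varphi(1,\cdot)$ on $[c,d]$, one checks that $\Sigma_{\lambda}$ is contained in an axis-parallel box $B_{\lambda}^{\ast}\subset\mathbb{R}^{4}$ with side lengths $L_{1}\approx\lambda^{\alpha_{1}}$, $L_{2}\approx\lambda^{\alpha_{2}}$, $L_{3}, L_{4}\lesssim\lambda^{m}$, so that $L_{1}L_{2}L_{3}L_{4}\lesssim\lambda^{\alpha_{1}+\alpha_{2}+2m}$.

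Fix once and for all a nonnegative $\rho\in\mathcal{S}(\mathbb{R}^{4})$ with $\widehat{\rho}\geq\chi_{[-1,1]^{4}}$ (e.g. a tensor product of squared Fej\'er kernels), let $\xi^{0}$ denote the center of $B_{\lambda}^{\ast}$, and define
\[
f_{\lambda}(x) \;=\; e^{i\langle x,\xi^{0}\rangle}\,\rho(L_{1}x_{1}, L_{2}x_{2}, L_{3}x_{3}, L_{4}x_{4}).
\]
A direct computation gives $\|f_{\lambda}\|_{L^{p}(\mathbb{R}^{4})} = (L_{1}L_{2}L_{3}L_{4})^{-1/p}\|\rho\|_{p}$ and
\[
\widehat{f_{\lambda}}(\xi) \;=\; (L_{1}L_{2}L_{3}L_{4})^{-1}\,\widehat{\rho}\!\left(\frac{\xi_{1}-\xi_{1}^{0}}{L_{1}},\dots,\frac{\xi_{4}-\xi_{4}^{0}}{L_{4}}\right),
\]
so that $|\widehat{f_{\lambda}}| \geq (L_{1}L_{2}L_{3}L_{4})^{-1}$ on $B_{\lambda}^{\ast}\supset\Sigma_{\lambda}$, yielding
\[
\|\mathcal{R}f_{\lambda}\|_{L^{q}(\Sigma,d\mu)} \;\geq\; (L_{1}L_{2}L_{3}L_{4})^{-1}\,\mu(\Sigma_{\lambda})^{1/q}.
\]

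Inserting this together with $\|f_{\lambda}\|_{p}$ into (\ref{R1}) and rearranging gives $\mu(\Sigma_{\lambda})^{1/q}\lesssim (L_{1}L_{2}L_{3}L_{4})^{1/p'}$, which in terms of $\lambda$ reads
\[
\lambda^{(\alpha_{1}+\alpha_{2})/q} \;\lesssim\; \lambda^{(\alpha_{1}+\alpha_{2}+2m)/p'},\qquad \lambda\in(0,1).
\]
Letting $\lambda\to 0^{+}$ forces $(\alpha_{1}+\alpha_{2})/q \geq (\alpha_{1}+\alpha_{2}+2m)/p'$, which is precisely the claimed condition. No serious obstacle is expected here: once the correct anisotropic Knapp box is read off from the homogeneity of H2, the rest is bookkeeping; the only mild point is the standard choice of $\rho$ with $\widehat{\rho}\geq\chi_{[-1,1]^{4}}$.
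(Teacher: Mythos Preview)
Your argument is correct and is essentially the same anisotropic Knapp homogeneity test as in the paper: both exploit the dilations $t\odot(\xi',\xi'')=(t\bullet\xi',t^{m}\xi'')$ to compare $\mu(\Sigma_{\lambda})\approx\lambda^{\alpha_{1}+\alpha_{2}}$ against a box of volume $\approx\lambda^{\alpha_{1}+\alpha_{2}+2m}$. The only cosmetic difference is the choice of test function---the paper takes $f_{\epsilon}=\chi_{E_{\epsilon}}$ (extending (\ref{R1}) from $\mathcal{S}$ to $L^{1}\cap L^{p}$) and estimates $\widehat{\chi_{E_{\epsilon}}}$ by hand via $\sin(\theta)/\theta\geq\sin 1$, whereas you use a Schwartz bump $\rho$ with $\widehat{\rho}\geq\chi_{[-1,1]^{4}}$, which keeps the argument inside $\mathcal{S}(\mathbb{R}^{4})$ and avoids that side step.
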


\begin{proof}
We observe that if (\ref{R1}) holds for $f \in \mathcal{S}(\mathbb{R}^{4})$, then it holds also for 
$f \in L^{1}(\mathbb{R}^{4}) \cap L^{p}(\mathbb{R}^{4})$. Let 
$Q= \left\{ t \bullet (1,s) : s \in [s_1, s_2] \,\, \text{and} \,\, \frac{1}{2} \leq t \leq 1 \right\}$, where $c < s_1 < s_2 < d$. 
We fix $0 < \eta < 1$ such that $|\varphi(x)| < 1$ for all $x \in \eta \bullet Q$. For $\xi'$, $\xi'' \in \mathbb{R}^{2}$ and $t>0$ let 
$t \odot (\xi', \xi'') = (t \bullet \xi', t^{m} \xi'')$, and for $0 < \epsilon < 1$ we set $E_{\epsilon} = \epsilon^{-1} \odot [0,1]^{4}$ 
and $f_{\epsilon} = \chi_{E_{\epsilon}}$.

For $x=(x_1, x_2) \in Q$, a change of variable gives
\[
\widehat{f_{\epsilon}}(x, \varphi(x)) = \int_{E_{\epsilon}} e^{-i(x_1 \xi_1 + x_2 \xi_2 + \xi_3 \varphi_1(x) + \xi_4 \varphi_2(x))} \, d\xi
\]
\[
= \epsilon^{-(\alpha_1 + \alpha_2 + 2m)} \int_{[0,1]^{4}} e^{-i(\epsilon^{-\alpha_1} x_1 \xi_1 + \epsilon^{-\alpha_2} x_2 \xi_2 + 
\epsilon^{-m} \xi_3 \varphi_1(x) + \epsilon^{-m} \xi_4 \varphi_2(x))} \, d\xi,
\]
where $d\xi = d\xi_1 \cdot \cdot \cdot d\xi_4$. We define $F_{\epsilon} = (\epsilon \eta) \bullet Q$ and 
$\Sigma_{\epsilon} = \left\{ (x, \varphi(x)) : x \in F_{\epsilon} \right\}$. Then,
\[
\| \mathcal{R}f_{\epsilon} \|_{L^{q}(\Sigma, d\mu)}^{q} \geq \| \mathcal{R}f_{\epsilon} \|_{L^{q}(\Sigma_{\epsilon}, d\mu)}^{q}
\]
\[
\geq \epsilon^{-q(\alpha_1 + \alpha_2 + 2m)} \int_{F_{\epsilon}} \left| \int_{[0,1]^{4}} e^{-i(\epsilon^{-\alpha_1} x_1 \xi_1 + 
\epsilon^{-\alpha_2} x_2 \xi_2 + \epsilon^{-m} \xi_3 \varphi_1(x) + \epsilon^{-m} \xi_4 \varphi_2(x))} \, d\xi  \right|^{q} dx.
\]
Also
\[
\left| \int_{[0,1]^{4}} e^{-i(\epsilon^{-\alpha_1} x_1 \xi_1 + 
\epsilon^{-\alpha_2} x_2 \xi_2 + \epsilon^{-m} \xi_3 \varphi_1(x) + \epsilon^{-m} \xi_4 \varphi_2(x))} \, d\xi \right|
\]
\begin{equation} \label{prod integral}
=\left| \int_{0}^{1} e^{-i\epsilon^{-\alpha_1} x_1 \xi_1} d\xi_1 \right| \left| \int_{0}^{1}  e^{-i\epsilon^{-\alpha_2} x_2 \xi_2} d\xi_2  
\right| \left|\int_{0}^{1}  e^{-i\epsilon^{-m} \xi_3 \varphi_1(x)} d\xi_3  \right| \left| \int_{0}^{1}  e^{-i\epsilon^{-m} \xi_4 \varphi_2(x)} d\xi_4 \right|.
\end{equation}

For every $x \in F_{\epsilon}$ fixed, we have that $x= (x_1, x_2) = ((\epsilon \eta t)^{\alpha_1}, (\epsilon \eta t)^{\alpha_2}s)$ with $\frac{1}{2} \leq t \leq 1$ and $s_1 \leq s \leq s_2$. To estimate the two first integrals in (\ref{prod integral}) from below we choose $\eta \in (0,1)$ such that $[\eta^{\alpha_2} s_1, \eta^{\alpha_2} s_2] \subset [-1,1]$, so 
$\epsilon^{-1} \bullet x = (\epsilon^{-\alpha_1} x_1, \epsilon^{-\alpha_2} x_2) = ((\eta t)^{\alpha_1}, (\eta t)^{\alpha_2}s)) \in [-1,1] \times [-1,1]$. Thus
\[
\left| \int_{0}^{1}  e^{-i\epsilon^{-\alpha_j} \xi_j x_j} d\xi_j \right| = 
\left| \frac{e^{-i \epsilon^{-\alpha_j} x_j} - 1}{\epsilon^{-\alpha_j} x_j} \right| \geq 
\frac{\sin(\epsilon^{-\alpha_j} x_j)}{\epsilon^{-\alpha_j} x_j} \geq \sin(1), \,\,\, \text{for every} \,\, j=1,2.
\]

For $x \in F_{\epsilon}$ we also have that $|\varphi_1(\epsilon^{-1} \bullet x)| < 1$, so the third integral in (\ref{prod integral}) can be lower bounded as
\[
\left| \int_{0}^{1}  e^{-i\epsilon^{-m} \xi_3 \varphi_1(x)} d\xi_3 \right| = 
\left| \frac{e^{-i\varphi_1(\epsilon^{-1} \bullet x)} - 1}{\varphi_1(\epsilon^{-1} \bullet x)} \right| \geq 
\frac{\sin(\varphi_1(\epsilon^{-1} \bullet x))}{\varphi_1(\epsilon^{-1} \bullet x)} \geq \sin(1).
\]
The last integral in (\ref{prod integral}) can be estimated similarly. Thus
\begin{equation} \label{R2}
\| \mathcal{R}f_{\epsilon} \|_{L^{q}(\Sigma, d\mu)} \geq C \epsilon^{-(\alpha_1+\alpha_2+2m) + \frac{\alpha_1+\alpha_2}{q}},
\end{equation}
with $C > 0$ independent of $\epsilon$. On the other hand 
$\| f_{\epsilon} \|_{L^{p}(\mathbb{R}^{4})} = \epsilon^{-\frac{\alpha_1+\alpha_2+2m}{p}}$, so (\ref{R1}) and (\ref{R2}) lead to
\[
C \epsilon^{-(\alpha_1+\alpha_2+2m) + \frac{\alpha_1+\alpha_2}{q}} \leq \epsilon^{-\frac{\alpha_1+\alpha_2+2m}{p}},
\]
this implies, taking $\epsilon$ small enough, that  
$\frac{1}{q} \geq \frac{(\alpha_1 + \alpha_2 + 2m)}{(\alpha_1 + \alpha_2)} \frac{1}{p'}$.
\end{proof}

P. Tomas in \cite{Tomas} pointed out the following result (see also \cite{Str}, Lemma 1).

\begin{lemma} \label{muhat conv estim}
If for some $1 \leq p < 2$, the inequality 
$\| \widehat{\mu} \ast f \|_{L^{p'}(\mathbb{R}^{4})} \leq C_{p}^{2} \| f \|_{L^{p}(\mathbb{R}^{4})}$ holds for all 
$f \in \mathcal{S}(\mathbb{R}^{4})$, then for that $p$
$$\| \mathcal{R}f \|_{L^{2}(\Sigma, d\mu)} \leq C_p \|f \|_{L^{p}(\mathbb{R}^{4})},$$ 
for all $f \in \mathcal{S}(\mathbb{R}^{4})$.
\end{lemma}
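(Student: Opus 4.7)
The plan is the classical Tomas--Stein $TT^*$ argument. Viewing the restriction operator $\mathcal{R} : L^{p}(\mathbb{R}^{4}) \to L^{2}(\Sigma, d\mu)$, the strategy is to identify $\mathcal{R}^{*}\mathcal{R}$ with (a reflection of) convolution by $\widehat{\mu}$, so that the hypothesized convolution bound controls $\mathcal{R}^{*}\mathcal{R}$, and then a duality pairing extracts the desired bound on $\mathcal{R}$.

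First I would compute the formal adjoint $\mathcal{R}^{*}$. For $f \in \mathcal{S}(\mathbb{R}^{4})$ and $g \in L^{2}(\Sigma, d\mu)$, Fubini's theorem (valid because $\mu$ is a finite measure and $\widehat{f}$ is bounded) yields
\begin{equation*}
\langle \mathcal{R}f, g \rangle_{L^{2}(d\mu)} = \int_{\Sigma} \widehat{f}(\xi)\, \overline{g(\xi)}\, d\mu(\xi) = \int_{\mathbb{R}^{4}} f(x)\, \overline{(g\,d\mu)^{\vee}(x)}\, dx,
\end{equation*}
so $\mathcal{R}^{*}g = (g\,d\mu)^{\vee}$. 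Composing $\mathcal{R}^{*}$ with $\mathcal{R}$ and interchanging the order of integration gives, for $f \in \mathcal{S}(\mathbb{R}^{4})$,
\begin{equation*}
\mathcal{R}^{*}\mathcal{R} f(x) = \int_{\mathbb{R}^{4}} f(y)\, \widehat{\mu}(y - x)\, dy = (f \ast \check{\widehat{\mu}})(x),
\end{equation*}
where $\check{h}(z) := h(-z)$. Since $\mu$ is real-valued, $\widehat{\mu}(-\xi) = \overline{\widehat{\mu}(\xi)}$, and convolution with $\check{\widehat{\mu}}$ obeys the same $L^{p}\to L^{p'}$ bound as convolution with $\widehat{\mu}$; hence the hypothesis implies
\begin{equation*}
\|\mathcal{R}^{*}\mathcal{R} f\|_{L^{p'}(\mathbb{R}^{4})} \leq C_{p}^{2} \|f\|_{L^{p}(\mathbb{R}^{4})}.
\end{equation*}

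The $TT^{*}$ step is then immediate: since $1 \leq p < 2 \leq p'$ are conjugate exponents, H\"older's inequality gives
\begin{equation*}
\|\mathcal{R}f\|_{L^{2}(d\mu)}^{2} = \langle \mathcal{R}^{*}\mathcal{R} f, f\rangle_{L^{2}(\mathbb{R}^{4})} \leq \|\mathcal{R}^{*}\mathcal{R} f\|_{L^{p'}}\, \|f\|_{L^{p}} \leq C_{p}^{2}\, \|f\|_{L^{p}}^{2},
\end{equation*}
and taking square roots concludes the proof. There is no serious obstacle here; the only points requiring care are the applications of Fubini's theorem above, both of which are routine for Schwartz $f$ and a finite, compactly supported $\mu$.
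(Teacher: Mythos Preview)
Your proof is correct and is precisely the classical $TT^{*}$ argument; the paper itself does not supply a proof for this lemma but merely attributes the result to Tomas \cite{Tomas} (see also \cite{Str}, Lemma~1), so your write-up faithfully supplies the standard argument behind that citation.
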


We are now in position to prove a sharp estimate for the restriction operator $\mathcal{R}$. For them, we embed the operator 
$T_{\widehat{\mu}}$ defined by $T_{\widehat{\mu}} f = \widehat{\mu} \ast f$ in an analytic family $\{ T_z \}$ of operators on the strip 
$-\frac{\alpha_1 + \alpha_2}{m} \leq \Re(z) \leq 2$, and then we apply the complex interpolation theorem followed by 
Lemma \ref{muhat conv estim} with $p= \frac{2(\alpha_1+\alpha_2+2m)}{\alpha_1+\alpha_2+4m}$.

\begin{theorem} \label{Restrict thm} There exists a positive constant $C$ such that
\begin{equation} \label{Restrict R}
\| \mathcal{R}f \|_{L^{2}(\Sigma, d\mu)} \leq C \| f \|_{L^{p}(\mathbb{R}^{4})}, \,\,\,\,  f \in \mathcal{S}(\mathbb{R}^{4})
\end{equation}
if and only if $\frac{\alpha_1+\alpha_2+4m}{2(\alpha_1+\alpha_2+2m)} \leq \frac{1}{p} \leq 1$.
\end{theorem}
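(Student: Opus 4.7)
\emph{Necessity.} This direction is immediate from Lemma~\ref{Knapp} applied with $q = 2$: the resulting inequality $\tfrac{1}{2} \geq \tfrac{\alpha_1+\alpha_2+2m}{\alpha_1+\alpha_2}\bigl(1-\tfrac{1}{p}\bigr)$ rearranges to $\tfrac{1}{p} \geq \tfrac{\alpha_1+\alpha_2+4m}{2(\alpha_1+\alpha_2+2m)}$, while $\tfrac{1}{p} \leq 1$ is forced by $p \geq 1$.

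\emph{Sufficiency, the trivial endpoint.} Since $V_1^{c,d}$ is bounded, $\mu$ is a finite Borel measure, so $\|\mathcal{R}f\|_{L^2(d\mu)} \leq \mu(\mathbb{R}^4)^{1/2}\|\widehat{f}\|_{L^\infty} \leq \mu(\mathbb{R}^4)^{1/2}\|f\|_{L^1}$ handles $p = 1$ trivially. The plan is to prove also the critical endpoint $p_0 := \tfrac{2(\alpha_1+\alpha_2+2m)}{\alpha_1+\alpha_2+4m}$ and then cover the full interval $\tfrac{\alpha_1+\alpha_2+4m}{2(\alpha_1+\alpha_2+2m)} \leq \tfrac{1}{p} \leq 1$ by Riesz--Thorin interpolation between $p = 1$ and $p = p_0$.

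\emph{The critical endpoint.} By Lemma~\ref{muhat conv estim} it is enough to show $\|\widehat{\mu}\ast f\|_{L^{p_0'}(\mathbb{R}^4)} \leq C\|f\|_{L^{p_0}(\mathbb{R}^4)}$. Following the approach of Christ, I would embed $T_{\widehat{\mu}}$ in an analytic family $\{T_z\}_{z \in \mathcal{S}}$ of convolution operators on the strip $\mathcal{S} = \{z \in \mathbb{C} : -\tfrac{\alpha_1+\alpha_2}{m} \leq \Re z \leq 2\}$, arranged so that $T_{z_*} = T_{\widehat{\mu}}$ (up to an explicit scalar) at the interior point $z_* = \tfrac{2m-\alpha_1-\alpha_2}{m}$. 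The choice of $z_*$ is dictated by the Stein interpolation formula: between $(L^1,L^\infty)$ at $\Re z = 2$ and $(L^2,L^2)$ at $\Re z = -\tfrac{\alpha_1+\alpha_2}{m}$, the fractional position $\theta = \tfrac{\alpha_1+\alpha_2}{\alpha_1+\alpha_2+2m}$ yields $\tfrac{1}{p_0} = 1-\tfrac{\theta}{2}$ and $\tfrac{1}{p_0'} = \tfrac{\theta}{2}$. Concretely, one builds $T_z$ by combining $\widehat{\mu}$ with an analytic family of Riesz-type weights in the $x''$-variables (adapted to the nonisotropic dilation $t \bullet$), normalized by an entire factor $\gamma(z)$ of $1/\Gamma$-type that both cancels the Riesz poles and enforces $T_{z_*} = T_{\widehat{\mu}}$. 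On $\Re z = 2$, the weighting is smoothing enough that the kernel $K_z$ of $T_z$ is a bounded continuous function on $\mathbb{R}^4$ --- the pointwise bound coming from Theorem~\ref{muhat estimate} combined with the nonisotropic homogeneity H2 of $\mu$ --- giving $\|T_z f\|_{L^\infty} \leq C_1 e^{a|\Im z|}\|f\|_{L^1}$. On $\Re z = -\tfrac{\alpha_1+\alpha_2}{m}$, the accompanying Fourier multiplier $\widehat{K_z}$ is bounded, because the weight carries a factor $|\xi''|^{\frac{\alpha_1+\alpha_2}{m}}$ that precisely absorbs the $|\xi''|^{-\frac{\alpha_1+\alpha_2}{m}}$ decay of $\widehat{\mu}$ from Theorem~\ref{muhat estimate}, so Plancherel yields $\|T_z f\|_{L^2} \leq C_2 e^{a|\Im z|}\|f\|_{L^2}$. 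Stein's complex interpolation theorem applied at $z = z_*$ then delivers $T_{z_*} = T_{\widehat{\mu}} : L^{p_0} \to L^{p_0'}$, and Lemma~\ref{muhat conv estim} converts this into the desired restriction estimate at $p_0$.

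\emph{Main obstacle.} The real technical work is the construction of the analytic family so that \emph{both} endpoint bounds hold simultaneously with admissible exponential growth in $|\Im z|$. The left edge of the strip is forced by the sharp exponent in Theorem~\ref{muhat estimate} (the $L^2 \to L^2$ bound is a direct Plancherel consequence of that decay), and the right edge must be chosen so that the interpolated point $z_*$ at which $T_z = T_{\widehat{\mu}}$ recovers precisely $p_0$. Ensuring that the Riesz weight at $\Re z = 2$ leaves $K_z$ pointwise bounded on $\mathbb{R}^4$ is where the nonisotropic homogeneity H2 of $\mu$ is essential. Once this family is set up, Stein's theorem, Lemma~\ref{muhat conv estim}, and Riesz--Thorin against the $p = 1$ endpoint finish the argument routinely.
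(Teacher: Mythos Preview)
Your overall strategy is exactly the paper's: Lemma~\ref{Knapp} for necessity, the trivial $p=1$ endpoint, Lemma~\ref{muhat conv estim} plus Stein interpolation on an analytic family built from Riesz kernels in the $x''$-variable (normalized by a $1/\Gamma$ factor) for the critical $p_0$, and Riesz--Thorin to fill in the range. But the r\^oles of the two edges of the strip are reversed in your sketch, and as a consequence the mechanisms you cite for each endpoint bound are swapped.

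Concretely, the paper's family is $T_zf=\widehat{(J_z\ast\mu)}\ast f$ with $J_z=\delta\otimes I_z$ and $I_z(s)=\frac{2^{-z/2}}{\pi\Gamma(z/2)}|s|^{z-2}$ on $\mathbb{R}^2$. Since $I_0=\delta$ one has $T_0=T_{\widehat{\mu}}$, so the base operator sits at $z=0$, not at your $z_*=\frac{2m-\alpha_1-\alpha_2}{m}$. On $\Re z=-\frac{\alpha_1+\alpha_2}{m}$ the \emph{kernel} $\widehat{J_z}\,\widehat{\mu}$ is bounded because $|\widehat{J_z}(\xi)|\lesssim|\xi''|^{\frac{\alpha_1+\alpha_2}{m}}$ absorbs the decay of $\widehat{\mu}$ from Theorem~\ref{muhat estimate}; Young's inequality then gives $L^1\to L^\infty$. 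On $\Re z=2$ one computes directly $(J_z\ast\mu)(x',x'')=\frac{2^{-z/2}}{\pi\Gamma(z/2)}\,|x''-\varphi(x')|^{z-2}\,\chi_{V_1^{c,d}}(x')$, which has purely imaginary exponent and is therefore bounded; Plancherel gives $L^2\to L^2$. Theorem~\ref{muhat estimate} and the homogeneity H2 play no r\^ole at this edge. Your description invokes Theorem~\ref{muhat estimate} at \emph{both} edges and attributes the $L^2$ bound to the Riesz weight cancelling the $|\xi''|^{-\frac{\alpha_1+\alpha_2}{m}}$ decay of $\widehat{\mu}$ --- that is the mechanism for the $L^1\to L^\infty$ edge, not the $L^2\to L^2$ one. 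Once the edge assignments are corrected, the rest of your outline (Stirling to check admissible growth, Stein interpolation at $z=0$, Lemma~\ref{muhat conv estim}, Riesz--Thorin) is precisely what the paper does.
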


\begin{proof} To prove the statement of the theorem we consider the family $\{ |s|^{z-2} \}$ of functions initially defined when 
$\Re(z) > 0$ and $s \in \mathbb{R}^{2} \setminus\{ 0 \}$. This family of functions can be extended, in the $z$ variable, to an analytic family of distributions on $\mathbb{C} \setminus \{ -2k : k \in \mathbb{N} \cup \{0 \} \}$. By abuse of notation, we denote this extension 
by $|s|^{z-2}$. The family 
$\{ |s|^{z-2} \}$ have simple poles in $z = -2k$ for $k \in \mathbb{N} \cup \{0 \}$. Since the meromorphic continuation of the function 
$\Gamma \left( \frac{z}{2}\right)$ (we keep the notation for his continuation) has simple poles at the same points (i.e. $z = -2k$), the family $\{ I_z \}$ of distributions defined by
\begin{equation}
I_{z}(s)=\frac{2^{-\frac{z}{2}}}{\pi \Gamma \left( \frac{z}{2}\right) }   
| s | ^{z-2}  \label{iz}
\end{equation}
results in an entire family of distributions (see pp. 71-74 in \cite{Gelfand}).

From this construction, and by taking the ratios of the corresponding residues at $z=0$, we have $I_{0} = \delta$,  where $\delta $ is the Dirac distribution at the origin on $\mathbb{R}^{2}$ (see equation (9), pp. 74 in \cite{Gelfand}), also $\widehat{I_{z}}= 4\pi I_{2-z}$ 
(see equation ($2'$), pp. 194 in \cite{Gelfand}).

For $z \in \mathbb{C}$, we also define $J_{z}$ as the distribution on $\mathbb{R}^{4}$ given by the tensor product
\begin{equation} \label{Jz}
J_{z} = \delta \otimes I_{z},
\end{equation}
where $\delta$ is the Dirac distribution at the origin on $\mathbb{R}^{2}$ and $I_z$ is given by (\ref{iz}).

Let $\{ T_z \}$ be the analytic family of operators on the strip $-\frac{\alpha_1+\alpha_2}{m} \leq \Re(z) \leq 2$, given by
\[
T_z f = \widehat{(J_z \ast \mu)} \ast f, \,\,\, \text{for} \,\, f \in \mathcal{S}(\mathbb{R}^{4}).
\]
It is clear that $T_0 = T_{\widehat{\mu}}$. For $\Re(z) = -\frac{\alpha_1+\alpha_2}{m}$, we have
\[
\left| \widehat{J_z}(\xi) \right| \leq \left| \frac{2^{\frac{z}{2}+1}}{\Gamma\left(1-\frac{z}{2}\right)} \right|
\left| \xi'' \right|^{\frac{\alpha_1+\alpha_2}{m}}, \,\,\, \text{for all} \,\, 
\xi = (\xi', \xi'') \in \mathbb{R}^{2} \times \mathbb{R}^{2}.
\]
Thus, Young's inequality and Theorem \ref{muhat estimate} give
\begin{equation} \label{estimTz1}
\| T_z f\|_{L^{\infty}(\mathbb{R}^{4})} = \| (\widehat{J_z} \widehat{\mu}) \ast f \|_{L^{\infty}(\mathbb{R}^{4})} \leq C
\left| \frac{2^{\frac{z}{2}+1}}{\Gamma\left(1-\frac{z}{2}\right)} \right| \| f \|_{L^{1}(\mathbb{R}^{4})}, \,\,\, \text{for} \,\,
\Re(z) = -\frac{\alpha_1+\alpha_2}{m},
\end{equation}
where $C$ does not depend on $z$. On the other hand, one also can see that $(J_z \ast \mu)(x',x'') = \frac{2^{-\frac{z}{2}}}{\pi \Gamma \left( \frac{z}{2} \right)} |x'' - \varphi(x')|^{z-2}$ for $x', x'' \in \mathbb{R}^{2}$. Then, for $\Re(z) = 2$ we have
\[
\| J_z \ast \mu \|_{L^{\infty}(\mathbb{R}^{4})} \leq \left| \frac{2^{-\frac{z}{2}}}{\pi \Gamma\left(\frac{z}{2}\right)} \right|.
\]
So 
\begin{equation} \label{estimTz2}
\| T_z f \|_{L^{2}(\mathbb{R}^{4})} = \left\| \widehat{T_z f} \right\|_{L^{2}(\mathbb{R}^{4})} \leq 
\left| \frac{2^{-\frac{z}{2}}}{\pi \Gamma\left(\frac{z}{2}\right)} \right| \left\| \widehat{f} \, \right\|_{L^{2}(\mathbb{R}^{4})} = 
\left| \frac{2^{-\frac{z}{2}}}{\pi \Gamma\left(\frac{z}{2}\right)} \right| \| f \|_{L^{2}(\mathbb{R}^{4})}, \,\,\, \text{for} 
\,\, \Re(z) = 2. 
\end{equation}
Now, we check that the family $\{ T_z \}$ satisfies the hypotheses of the Stein's complex interpolation theorem 
(see \cite{SteinWeiss}, pp. 205) on the strip $-\frac{\alpha_1+\alpha_2}{m} \leq \Re(z) \leq 2$. For them, let
\[
M_{-\frac{\alpha_1+\alpha_2}{m}}(y) := 
\left| \frac{2^{-\frac{\alpha_1+\alpha_2}{2m}+\frac{iy}{2}+1}}{\Gamma\left(1 +\frac{\alpha_1+\alpha_2}{2m}-\frac{iy}{2} \right)} \right|
\,\,\,\,\, \text{and} \,\,\,\,\,
M_{2}(y) := \left| \frac{2^{-\frac{2+iy}{2}}}{\pi \Gamma\left(\frac{2+iy}{2}\right)} \right|. 
\]
We recall the Stirling's formula (see e.g. \cite{Stein4}). This states that 
\[
\Gamma(z) \sim \sqrt{2 \pi} z^{z-\frac{1}{2}} e^{-z}, \,\,\,\, \text{as} \,\, |z| \rightarrow +\infty \,\,\,\, \text{and} \,\, 
|\arg(z)| \leq \frac{\pi}{2}.
\]
Then, for every $a > 0$ fixed, with the aid of the Stirling's formula, it is easy to check that
\begin{equation} \label{Stirling}
\sup_{-\infty < y < +\infty} e^{-a|y|} \log M_{-\frac{\alpha_1+\alpha_2}{m}}(y) < +\infty \,\,\,\,\,\, \text{and} \,\,\,\,
\sup_{-\infty < y < +\infty} e^{-a|y|} \log M_{2}(y) < +\infty.
\end{equation}
From (\ref{estimTz1}), (\ref{estimTz2})  and (\ref{Stirling}) taking there $0 < a < \frac{\pi}{2 + \frac{\alpha_1 + \alpha_2}{m}}$ follow that the family $\{ T_z \}$ satisfies the hypotheses of the complex interpolation theorem on the strip 
$-\frac{\alpha_1+\alpha_2}{m} \leq \Re(z) \leq 2$ and then, since $-\frac{\alpha_1+\alpha_2}{m} t+ 2 (1-t)=0$ for $t= \frac{2m}{\alpha_1+\alpha_2+2m}$, $T_0 = T_{\widehat{\mu}}$ is a bounded operator from $L^{p}(\mathbb{R}^{4})$ into $L^{p'}(\mathbb{R}^{4})$ for 
$\frac{1}{p} = t + \frac{1-t}{2} = \frac{\alpha_1+\alpha_2+ 4m}{2(\alpha_1+\alpha_2+2m)}$. This gives (\ref{Restrict R}) for that $p$ (see Lemma \ref{muhat conv estim}). Finally, the global estimate 
$\| \widehat{f} \|_{L^{\infty}(\mathbb{R}^{4})} \leq \| f\|_{L^{1}(\mathbb{R}^{4})}$ implies that
$\mathcal{R} : L^{1}(\mathbb{R}^{4}) \to L^{2}(\Sigma, \mu)$ is bounded, so the theorem follows from the Riesz-Thorin Convexity Theorem and 
Lemma \ref{Knapp}.
\end{proof}

\begin{corollary}
If $(\frac{1}{p}, \frac{1}{q})$ belongs to the closed quadrilateral with vertices $(1,0)$, $(1,1)$, 
$(\frac{\alpha_1+\alpha_2+4m}{2(\alpha_1+\alpha_2+2m)}, 1)$ and $(\frac{\alpha_1+\alpha_2+4m}{2(\alpha_1+\alpha_2+2m)}, \frac{1}{2})$, then
the restriction operator $\mathcal{R}$ is bounded from $L^{p}(\mathbb{R}^{4})$ into $L^{q}(\Sigma, d\mu)$.
\end{corollary}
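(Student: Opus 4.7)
The plan is to realize the trapezoid as the convex hull of four corners, obtain boundedness at each corner by combining the endpoint of Theorem \ref{Restrict thm} with the trivial Hausdorff--Young bound and the finiteness of $\mu$, and then fill it in by Riesz--Thorin interpolation. Write $p^{\ast} = \frac{2(\alpha_1+\alpha_2+2m)}{\alpha_1+\alpha_2+4m}$, so the four vertices are $A=(1,0)$, $B=(1,1)$, $C=(1/p^{\ast},1)$, $D=(1/p^{\ast},1/2)$. The key structural observation is that $V_{1}^{c,d}$ is bounded, hence $\mu(\Sigma) < \infty$; this makes the vertical embedding $L^{q}(\Sigma,d\mu) \hookrightarrow L^{r}(\Sigma,d\mu)$ available for $r \le q$.

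First I would establish the two endpoints on the bottom edge $AD$. The trivial estimate $\|\mathcal{R}f\|_{L^{\infty}(\Sigma,d\mu)} \le \|\widehat{f}\,\|_{\infty} \le \|f\|_{L^{1}(\mathbb{R}^{4})}$ gives the vertex $A$. Theorem \ref{Restrict thm}, applied at the smallest admissible value of $1/p$, gives
\[
\|\mathcal{R}f\|_{L^{2}(\Sigma,d\mu)} \le C\,\|f\|_{L^{p^{\ast}}(\mathbb{R}^{4})},
\]
which is precisely the vertex $D$. Applying the Riesz--Thorin convexity theorem to the pair $(A,D)$, one obtains boundedness of $\mathcal{R}\colon L^{p_{\theta}}(\mathbb{R}^{4}) \to L^{q_{\theta}}(\Sigma,d\mu)$ for every $\theta\in[0,1]$, where
\[
\frac{1}{p_{\theta}} = (1-\theta) + \frac{\theta}{p^{\ast}}, \qquad \frac{1}{q_{\theta}} = \frac{\theta}{2},
\]
so that the full segment $AD$ is in the admissible region.

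Next I would propagate boundedness upward (in $1/q$) by Hölder's inequality: since $\mu(\Sigma)<\infty$,
\[
\|g\|_{L^{r}(\Sigma,d\mu)} \le \mu(\Sigma)^{\frac{1}{r}-\frac{1}{q_{\theta}}}\,\|g\|_{L^{q_{\theta}}(\Sigma,d\mu)} \qquad (1 \le r \le q_{\theta}).
\]
Consequently, from each $(1/p_{\theta},1/q_{\theta})$ on segment $AD$, boundedness of $\mathcal{R}\colon L^{p_{\theta}}(\mathbb{R}^{4}) \to L^{r}(\Sigma,d\mu)$ extends to every $r$ with $1/q_{\theta}\le 1/r\le 1$. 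The union of these vertical segments, taken over $\theta\in[0,1]$, is exactly the closed quadrilateral with vertices $A,B,C,D$, which proves the corollary. There is no serious obstacle: the only points to be careful about are that $p^{\ast}\ge 1$ (so the Riesz--Thorin interpolation parameters are admissible) and that $\mu(\Sigma)<\infty$ (which is immediate from the boundedness of $V_{1}^{c,d}$ and the continuity of $\varphi$).
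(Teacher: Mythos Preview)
Your proof is correct and uses essentially the same ingredients as the paper: the trivial $L^1\to L^\infty$ bound, Theorem~\ref{Restrict thm} at the endpoint $p^\ast$, the finiteness of $\mu$ via H\"older, and Riesz--Thorin interpolation. The only cosmetic difference is the order of operations---you interpolate along the bottom edge $AD$ first and then push upward in $1/q$ by H\"older, whereas the paper first extends by H\"older (obtaining the full left edge and the vertex $C$) and then interpolates; both routes cover the same quadrilateral.
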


\begin{proof}
The global estimate $\| \widehat{f} \|_{L^{\infty}(\mathbb{R}^{4})} \leq \| f\|_{L^{1}(\mathbb{R}^{4})}$ gives, for every 
$1 \leq q \leq \infty$, the $L^{1}(\mathbb{R}^{4}) \rightarrow L^{q}(\Sigma, d\mu)$ bound for $\mathcal{R}$. To apply Holder's inequality with $p=2$ followed by Theorem \ref{Restrict thm} we obtain the 
$L^{\frac{2(\alpha_1+\alpha_2+2m)}{\alpha_1+\alpha_2+4m}}(\mathbb{R}^{4}) \rightarrow L^{1}(\Sigma, d\mu)$ bound for $\mathcal{R}$.
Finally, the corollary follows from the Riesz-Thorin Convexity Theorem.
\end{proof}

Let $\mu$ be a Borel measure and let $T_{\mu}$ be the convolution operator by $\mu$, defined by $T_{\mu}f = \mu \ast f$, 
$f \in \mathcal{S}(\mathbb{R}^{4})$, and let $E_{\mu}$ be the set of all pairs 
$\left( \frac{1}{p}, \frac{1}{q} \right) \in [0,1] \times [0,1]$ such that 
$\| T_{\mu}f \|_{L^{q}(\mathbb{R}^{4})} \leq C \| f \|_{L^{p}(\mathbb{R}^{4})}$ for all $f \in \mathcal{S}(\mathbb{R}^{4})$ and for some positive constant $C$ depending only on $p$ and $q$. We say that the measure $\mu$ is $L^{p}$-improving if $E_{\mu}$ does not reduce to the diagonal $\frac{1}{p} = \frac{1}{q}$. 

\

The following proposition gives a necessary condition for the $L^{p}(\mathbb{R}^{4}) - L^{q}(\mathbb{R}^{4})$ boundedness of the operator 
$T_{\mu}$. This result is a particular case of Proposizione 2.2 in \cite{Fulvio} applied to the measure $\mu$ given by (\ref{mu2}). For readers' convenience, we include a proof.

\begin{proposition} \label{prop restrict mu}
If $\left( \frac{1}{p}, \frac{1}{q} \right) \in E_{\mu}$, then 
$\frac{1}{q} \geq \frac{1}{p} - \frac{\alpha_1+\alpha_2}{\alpha_1+\alpha_2 + 2m}$.
\end{proposition}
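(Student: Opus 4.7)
The plan is to adapt the Knapp homogeneity argument of Lemma \ref{Knapp} to the convolution operator, testing $T_\mu$ against the characteristic function of a small anisotropic box placed at the right scale. Fix $s_0 \in (c,d)$ and, for $\epsilon > 0$ small, set $y_0 = \epsilon \bullet (1, s_0)$, which belongs to $V_1^{c,d}$. For positive constants $c_0, c_1$ to be chosen below, let
\[
E_\epsilon = [0, c_0 \epsilon^{\alpha_1}] \times [0, c_0 \epsilon^{\alpha_2}] \times [0, c_1 \epsilon^m] \times [0, c_1 \epsilon^m]
\]
and $f_\epsilon = \chi_{E_\epsilon}$, so that $\|f_\epsilon\|_{L^p(\mathbb{R}^4)} \leq C\,\epsilon^{(\alpha_1+\alpha_2+2m)/p}$.

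The crux is a matching lower bound on $\|T_\mu f_\epsilon\|_q$. Using the parametrization $y = \tau \bullet (1, s)$ together with hypothesis H2, when $y$ ranges over $y_0 + [0, c_0\epsilon^{\alpha_1}] \times [0, c_0\epsilon^{\alpha_2}]$ the corresponding $(\tau, s)$ vary by $O(c_0 \epsilon)$ and $O(c_0)$ respectively, so that $\varphi(y) = \tau^m \varphi(1,s)$ deviates from $\varphi(y_0) = \epsilon^m \varphi(1, s_0)$ by at most $K c_0 \epsilon^m$, with $K$ depending only on $\varphi$ and $s_0$. Choosing $c_1 > 4Kc_0$, one verifies that for every $x$ of the form
\[
x = (y_0 + \eta, \varphi(y_0) + \xi),
\]
with $\eta$ in a suitable sub-box of side lengths comparable to $c_0 \epsilon^{\alpha_1}, c_0 \epsilon^{\alpha_2}$ and $\xi$ in a sub-box of side lengths comparable to $c_1 \epsilon^m, c_1 \epsilon^m$, the condition $(y, \varphi(y)) \in x - E_\epsilon$ is satisfied by a full sub-box of $y$'s of Lebesgue measure $\gtrsim \epsilon^{\alpha_1+\alpha_2}$. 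Since the parametrization $(\eta, \xi) \mapsto x$ has Jacobian equal to $1$, the set of admissible $x$ has Lebesgue measure $\gtrsim \epsilon^{\alpha_1+\alpha_2+2m}$, and hence
\[
\|T_\mu f_\epsilon\|_{L^q(\mathbb{R}^4)} \gtrsim \epsilon^{(\alpha_1+\alpha_2) + (\alpha_1+\alpha_2+2m)/q}.
\]

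Substituting into the hypothesized inequality $\|T_\mu f_\epsilon\|_q \leq C \|f_\epsilon\|_p$ and letting $\epsilon \to 0^+$ forces the exponent of $\epsilon$ on the left to dominate the one on the right, namely
\[
(\alpha_1+\alpha_2) + \frac{\alpha_1+\alpha_2+2m}{q} \geq \frac{\alpha_1+\alpha_2+2m}{p},
\]
which rearranges to the asserted inequality $1/q \geq 1/p - (\alpha_1+\alpha_2)/(\alpha_1+\alpha_2+2m)$. The main obstacle is the quantitative control of the $\varphi$-oscillation over the Knapp box. The choice of scale $y_0 = \epsilon \bullet (1, s_0)$ is essential: it is precisely at this scale that the anisotropic derivative sizes $|\partial_{x_i}\varphi(y_0)| \sim \epsilon^{m-\alpha_i}$, multiplied by the tangential box dimensions $\epsilon^{\alpha_i}$, produce an oscillation of order $\epsilon^m$ that matches the vertical dimensions of $E_\epsilon$; at any other scale the box would be either too thin or too thick for the argument to close.
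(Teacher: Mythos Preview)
Your argument is correct and follows essentially the same Knapp-type homogeneity construction as the paper: an anisotropic test box of dimensions $\epsilon^{\alpha_1}\times\epsilon^{\alpha_2}\times\epsilon^{m}\times\epsilon^{m}$, a lower bound $T_\mu f_\epsilon \gtrsim \epsilon^{\alpha_1+\alpha_2}$ on a set of measure $\gtrsim \epsilon^{\alpha_1+\alpha_2+2m}$ obtained via the homogeneity $\varphi(t\bullet x)=t^m\varphi(x)$, and comparison of exponents as $\epsilon\to 0$. The only cosmetic difference is that the paper works with the full dilated patch $E_\delta=\delta\bullet Q$ and its graph-tube $A_\delta$, whereas you anchor everything at a single basepoint $y_0=\epsilon\bullet(1,s_0)$ and control the oscillation of $\varphi$ through the parametrization; both routes yield the same exponents.
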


\begin{proof}
For $c < s_1 < s_2 < d$, let $Q= \{ (t^{\alpha_1}, t^{\alpha_2} s) : \frac{1}{2} \leq t \leq 1, \, s_1 \leq s \leq s_2 \}$ and 
$M = \sup \{ |\varphi(x)| : x \in Q \}$. For $\delta > 0$ and small, we define
\[
R_{\delta} = (-2\delta^{\alpha_1}, 2\delta^{\alpha_1}) \times (-2\delta^{\alpha_2}, 2\delta^{\alpha_2}) \times 
(-(2M+1)\delta^{m}, (2M+1)\delta^{m})^{2},
\]
$f_{\delta} = \chi_{R_{\delta}}$, $E_{\delta} = \delta \bullet Q$ and
\[
A_{\delta} = \{ (x', x'') \in E_{\delta} \times \mathbb{R}^{2} : \| x'' - \varphi(x') \|_{\mathbb{R}^{2}} \leq \delta \}.
\]

To prove the proposition it suffices to show that there exists a positive constant $C$ independent of $\delta$ such that 
$| \mu \ast f_{\delta}(x', x'')| \geq C \delta^{\alpha_1 + \alpha_2}$. Indeed,
\[
\mu \ast f_{\delta}(x', x'') = \int_{V^{c,d}_{1}}  \chi_{R_{\delta}}(x' - y, x'' - \varphi(y)) \, dy.
\]
Then, for every $(x', x'') \in A_{\delta}$ fixed, from the homogeneity of $\varphi$, it follows that
$(x' - y, x'' - \varphi(y)) \in R_{\delta}$ for all $y \in E_{\delta}$. So,
\[
| \mu \ast f_{\delta}(x', x'')| \geq |E_{\delta}| = \delta^{\alpha_1 + \alpha_2} |Q|, \,\,\, \text{for all} \,\, (x', x'') \in A_{\delta},
\]
and therefore 
\[
\| \mu \ast f_{\delta} \|_{q} \geq \left( \int_{A_{\delta}} |\mu \ast f_{\delta}|^{q} \right)^{1/q} \geq C \delta^{\alpha_1 + \alpha_2} |A_{\delta}|^{1/q} = C \delta^{\alpha_1 + \alpha_2 + \frac{\alpha_1 + \alpha_2  + 2m}{q}},
\]
where $C = |Q|$. On the other hand, $(\frac{1}{p}, \frac{1}{q}) \in E_{\mu}$ implies that 
$\| \mu \ast f_{\delta} \|_{q} \leq C_{p,q} \| f_{\delta} \|_{p} \leq C_{p,q} \delta^{\frac{\alpha_1 + \alpha_2  + 2m}{p}}$.
Thus $\delta^{\alpha_1 + \alpha_2 + \frac{\alpha_1 + \alpha_2  + 2m}{q}} \leq C \delta^{\frac{\alpha_1 + \alpha_2  + 2m}{p}}$ for all
$\delta > 0$ and small. This implies, taking $\delta$ small enough, that $\frac{1}{q} \geq \frac{1}{p} - \frac{\alpha_1+\alpha_2}{\alpha_1+\alpha_2 + 2m}$.
\end{proof}

The following theorem states that the measure $\mu$ given by (\ref{mu2}) is $L^{p}$-improving.

\begin{theorem} \label{Type set thm}
For $p = \frac{\alpha_1+\alpha_2+2m}{\alpha_1+\alpha_2+m}$, the closed triangle with vertices $(0,0)$, $(1,1)$ and 
$\left(\frac{1}{p}, \frac{1}{p'} \right)$ is contained in $E_{\mu}$ and $\left(\frac{1}{p}, \frac{1}{p'} \right) \in \partial E_{\mu}$.
\end{theorem}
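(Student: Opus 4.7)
The plan is to adapt the analytic interpolation of Theorem \ref{Restrict thm} to the family $S_z f = (J_z \ast \mu) \ast f$ acting directly on $\mathbb{R}^4$, so that $S_0 = T_\mu$. Since $\mu$ is a finite Borel measure supported on the bounded set $V_1^{c,d}$, Young's inequality gives $T_\mu \colon L^r \to L^r$ for $r = 1, \infty$, placing $(0,0)$ and $(1,1)$ in $E_\mu$. The Riesz--Thorin convexity theorem will then fill in the triangle, provided we establish the single nontrivial bound $T_\mu \colon L^p(\mathbb{R}^4) \to L^{p'}(\mathbb{R}^4)$ at the vertex $(1/p,1/p')$.

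To establish this bound, I would consider $\{S_z\}$ on the strip $-\frac{\alpha_1+\alpha_2}{m} \le \Re(z) \le 2$. On $\Re(z) = 2$, the formula $(J_z \ast \mu)(x',x'') = \frac{2^{-z/2}}{\pi \Gamma(z/2)} |x'' - \varphi(x')|^{z-2}$ (already used in Theorem \ref{Restrict thm}) shows that $J_z \ast \mu$ is a bounded function, and Young's inequality yields
\[
\|S_z f\|_{L^\infty(\mathbb{R}^4)} \le \left|\frac{2^{-z/2}}{\pi \Gamma(z/2)}\right| \|f\|_{L^1(\mathbb{R}^4)}.
\]
On $\Re(z) = -\frac{\alpha_1+\alpha_2}{m}$, Plancherel together with the pointwise estimate $|\widehat{J_z}(\xi)| \le |2^{z/2+1}/\Gamma(1-z/2)|\,|\xi''|^{-\Re(z)}$ and Theorem \ref{muhat estimate} produces the $L^2 \to L^2$ bound
\[
\|S_z f\|_{L^2(\mathbb{R}^4)} \le C\left|\frac{2^{z/2+1}}{\Gamma(1-z/2)}\right| \|f\|_{L^2(\mathbb{R}^4)}.
\]
The admissibility of the norm growth along vertical lines follows from Stirling's formula exactly as verified inside Theorem \ref{Restrict thm}. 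Applying Stein's complex interpolation theorem at the unique $t \in (0,1)$ with $2(1-t) - \frac{\alpha_1+\alpha_2}{m}\, t = 0$, namely $t = \frac{2m}{\alpha_1+\alpha_2+2m}$, yields $S_0 = T_\mu \colon L^p \to L^{p'}$ with $1/p = (1-t) + t/2$ and $1/p' = t/2$, which simplify to the exponent in the statement.

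For the boundary claim, Proposition \ref{prop restrict mu} says every $(1/r,1/s) \in E_\mu$ satisfies $1/s \ge 1/r - \frac{\alpha_1+\alpha_2}{\alpha_1+\alpha_2+2m}$. A direct computation gives
\[
\frac{1}{p} - \frac{1}{p'} = \frac{\alpha_1+\alpha_2+m}{\alpha_1+\alpha_2+2m} - \frac{m}{\alpha_1+\alpha_2+2m} = \frac{\alpha_1+\alpha_2}{\alpha_1+\alpha_2+2m},
\]
so $(1/p,1/p')$ achieves equality in this necessary condition; any point $(1/p,1/q)$ with $1/q < 1/p'$ lies outside $E_\mu$, placing $(1/p,1/p')$ on $\partial E_\mu$.

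The main conceptual ingredient --- the decay estimate for $\widehat{\mu}$ and the admissibility of the analytic family --- has already been done in Theorems \ref{muhat estimate} and \ref{Restrict thm}. I expect no genuine obstacle beyond a careful dualization of that argument: the $L^1 \to L^\infty$ and $L^2 \to L^2$ endpoints are simply swapped relative to the restriction proof (because we now work with $S_z$ rather than with its Fourier transform), while the interpolation line $\Re(z) = 0$ and the value of $t$ are the same.
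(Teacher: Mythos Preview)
Your proposal is correct and matches the paper's proof essentially line for line: the same analytic family $S_z f = (J_z \ast \mu) \ast f$, the same $L^1 \to L^\infty$ bound at $\Re(z)=2$ and $L^2 \to L^2$ bound at $\Re(z)=-\frac{\alpha_1+\alpha_2}{m}$, the same interpolation parameter $t = \frac{2m}{\alpha_1+\alpha_2+2m}$, and the same appeal to Proposition \ref{prop restrict mu} for the boundary claim. Your observation that the endpoint roles are swapped relative to the restriction argument is exactly the point.
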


\begin{proof}
It is known that if $\left( \frac{1}{p}, \frac{1}{q} \right) \in E_{\mu}$, then $\frac{1}{q} \leq \frac{1}{p}$ and 
$\frac{1}{q} \geq \frac{2}{p} -1$ (cf. \cite{SteinWeiss}, pp. 33; and \cite{Oberlin}, Theorem 1); from Proposition \ref{prop restrict mu}, we also have that $\frac{1}{q} \geq \frac{1}{p} - \frac{\alpha_1+\alpha_2}{\alpha_1+\alpha_2 + 2m}$. In particular, these facts imply that if $\left( \frac{1}{p}, \frac{1}{p'} \right) \in E_{\mu}$, then $\frac{1}{2} \leq \frac{1}{p} \leq
\frac{\alpha_1+\alpha_2+m}{\alpha_1+\alpha_2+2m}$. On the other hand, by Riesz-Thorin Theorem, $E_{\mu}$ is a convex set and, since $\mu$ is
finite, the diagonal $\frac{1}{q} = \frac{1}{p}$ is contained in $E_{\mu}$. Thus, to prove the theorem it is enough to see that
$\left(\frac{1}{p}, \frac{1}{p'} \right) \in E_{\mu}$ for $p = \frac{\alpha_1+\alpha_2+2m}{\alpha_1+\alpha_2+m}$. Consider now, for 
$z \in \mathbb{C}$, the analytic family of distributions $J_z$ defined by (\ref{Jz}). For $z$ in the strip 
$-\frac{\alpha_1+\alpha_2}{m} \leq \Re(z) \leq 2$, let $S_z$ be the analytic family of operators defined by
\[
S_z f = \mu \ast J_z \ast f, \,\,\,\,\,\, f \in \mathcal{S}(\mathbb{R}^{4}).
\]
From Theorem \ref{muhat estimate}, we have $| \widehat{\mu}(\xi', \xi'')| \leq C |\xi''|^{-\frac{\alpha_1+\alpha_2}{m}}$, and since 
$\widehat{J_z} = 1 \otimes 4 \pi I_{2-z}$ we obtain that
\[
\| S_z \|_{L^{2}(\mathbb{R}^{4}) \to L^{2}(\mathbb{R}^{4})} \leq \left\| \widehat{\mu} \widehat{J_z} \right\|_{\infty} \leq C 
\left| \frac{2^{\frac{z}{2}+1}}{\Gamma\left(1-\frac{z}{2}\right)} \right| \,\,\,\,\,\, \text{for} \,\, \Re(z) = -\frac{\alpha_1+\alpha_2}{m},
\]
where $C$ does not depend on $z$. Since
$(\mu \ast J_z)(x',x'') = \frac{2^{-\frac{z}{2}}}{\pi \Gamma \left( \frac{z}{2} \right)} |x'' - \varphi(x')|^{z-2}$ 
for $x', x'' \in \mathbb{R}^{2}$, we obtain
\[
\| S_z \|_{L^{1}(\mathbb{R}^{4}) \to L^{\infty}(\mathbb{R}^{4})} = \| \mu \ast J_z \|_{\infty} \leq   
\left| \frac{2^{-\frac{z}{2}}}{\pi \Gamma \left( \frac{z}{2} \right)} \right| \,\,\,\,\,\, \text{for}   \,\, \Re(z) = 2.
\]
As in Theorem \ref{Restrict thm}, the family $\{ S_z : -\frac{\alpha_1+\alpha_2}{m} \leq \Re(z) \leq 2 \}$ satisfies the hypotheses of the complex interpolation theorem and then, since $-\frac{\alpha_1+\alpha_2}{m} t+ 2 (1-t)=0$ for $t= \frac{2m}{\alpha_1+\alpha_2+2m}$, 
$S_0$ is a bounded operator from $L^{p}(\mathbb{R}^{4})$ into $L^{p'}(\mathbb{R}^{4})$ for 
$\frac{1}{p} = \frac{t}{2} + (1-t) = \frac{\alpha_1+\alpha_2+m}{\alpha_1+\alpha_2+2m}$. Finally, being $S_0 = T_{\mu}$ the theorem follows.
\end{proof}

\begin{remark}
If one consider $\varphi \in C^{\infty}(V^{a,b})$ with additional hypotheses in the lemma \ref{eigen functions}, then the theorems 
\ref{Restrict thm} and \ref{Type set thm} are still true. We observe that the lemma \ref{eigen functions} still holds if we replace the word analytic by smooth, the proof depends on certain "well-controlled" monotonicity conditions given in terms of some derivatives of fixed order being non-vanishing. The remain of the lemmas hold considering only $\varphi \in  C^{\infty}(V^{a,b})$. We leave the details to the interested reader.
\end{remark}

\section{An example}

Let $\varphi : \mathbb{R}^{2} \to \mathbb{R}^{2}$ defined by
\[
\varphi(x,y) = (\varphi_1(x,y), \varphi_2(x,y)) = \left( \frac{-4}{132}x^{12}+\frac{1}{30}y^{6}, \, \frac{-3}{132}x^{12}+ \frac{1}{30}y^{6}
+ \frac{5}{90}x^{6}y^{3} \right), \,\,\,\,\,\, (x,y) \in \mathbb{R}^{2}.
\]
In this case we have that $\varphi(t \bullet (x,y)) = t^{m}\varphi(x,y)$ with $\alpha_1= \frac{1}{2}$, $\alpha_2=1$ and $m = 6$, so H1 
and H2 hold. A computation shows that the discriminant of the quadratic form
\[
(\zeta_1, \zeta_2) \to \det\left( \zeta_1 \varphi_{1}''(1,y) + \zeta_2 \varphi_{2}''(1,y) \right) = \langle K(1,y) \, \zeta, \,\zeta \rangle
\]
is
\[
\det K(1,y)= -4y^{4} \left(-3+\frac{5}{3}y^{3}\right) \left(y^{4}+\frac{y}{3} \right) + 4y^{8} - \frac{1}{4} 
\left[ 4 \left(y^{4}+\frac{y}{3} \right) +  y^{4}\left(3-\frac{5}{3}y^{3}\right) \right]^{2}.
\]
We observe that $k_{11}(1,y) = -4y^{4}$, and the discriminant is positive for every $y \in \left[\frac{97}{100}, 1 \right)$ and it has a simple zero for $y=1$. By Lemma \ref{eigen functions}, we have that the eigenvalues $\Lambda_1(1, \cdot \, )$ and $\Lambda_2(1, \cdot \, )$ of
$K(1, \cdot \, )$ are real analytic on $(\frac{97}{100}, 1)$. Since $\Lambda_1(1,1) \Lambda_2(1,1) = \det K(1,1) = 0$ and 
$\left[ \frac{d}{dy} \det K(1,y) \right]_{y=1}\neq 0$, it follows that $\Lambda_1(1,1) \neq 0$ and 
$\left[ \frac{d}{dy} \Lambda_2(1,y) \right]_{y=1}\neq 0$ or $\Lambda_2(1,1) \neq 0$ and 
$\left[ \frac{d}{dy} \Lambda_1(1,y) \right]_{y=1}\neq 0$. Thus, H3 holds with $\sigma =1$. It is clear that H4 also holds. Finally H1 - H4 hold for  $\varphi$ and $V = \left\{ (t^{1/2}, t y) : \frac{97}{99} < y < 1 \, \text{and} \, 0 < t < 1 \right\}$.

\

{\bf Acknowledgements.} We are very grateful to the referee for the careful reading of the paper and for the numerous useful comments and suggestions which helped us to improve the original manuscript.

\end{document}